\definecolor{mylinkcolor}{rgb}{0.5,0.0,0.0}
\definecolor{myurlcolor}{rgb}{0.0,0.0,0.7}
\newcommand{\Z}{\mathbb{Z}}
\newcommand{\Q}{\mathbb{Q}}
\newcommand{\Zp}{{\Z}_p}
\newcommand{\Hp}{\mbox{\small $H_p\left(\begin{matrix}\alpha\\\beta\end{matrix}\Big\vert z\right)$}}
\newcommand{\Hpfull}{H_p\left(\begin{matrix}\alpha\\\beta\end{matrix}\Big\vert z\right)}
\newcommand{\gammaprod}{\prod_{\gamma \in \beta}^{\gamma \in \alpha}}
\numberwithin{equation}{section}
\theoremstyle{plain}
\newtheorem{theorem}[equation]{Theorem}
\Crefname{theorem}{Theorem}{Theorems}
\Crefname{claim}{Claim}{Claim}
\Crefname{proposition}{Proposition}{Propositions}
\newtheorem{lemma}[equation]{Lemma}
\Crefname{lemma}{Lemma}{Lemmas}
\Crefname{corollary}{Corollary}{Corollaries}
\Crefname{conjecture}{Conjecture}{Conjectures}
\Crefname{hypothesis}{Hypothesis}{Hypotheses}
\Crefname{section}{\S\!\!}{\S\!\!}
\theoremstyle{definition}
\newtheorem{definition}[equation]{Definition}
\Crefname{definition}{Definition}{Definitions}
\newtheorem{notation}[equation]{Notation}
\Crefname{notation}{Notation}{Notations}
\newtheorem{example}[equation]{Example}
\Crefname{example}{Example}{Examples}
\Crefname{example}{Example}{Examples}
\Crefname{example}{Example}{Examples}
\Crefname{algm}{Algorithm}{Algorithms}
\theoremstyle{remark}
\newtheorem{remark}[equation]{Remark}
\Crefname{remark}{Remark}{Remarks}
\DeclareMathOperator{\Frac}{Frac}
\DeclareMathOperator{\frob}{Frob}
\DeclareMathOperator{\tr}{Tr}
\DeclareMathOperator{\lcm}{lcm}
\DeclareMathOperator{\rad}{rad}
\newcommand{\Sage}{Sage}
\newcommand{\Magma}{Magma}
\title{Hypergeometric $L$-functions in average polynomial time, II}
\author{Edgar Costa, Kiran S. Kedlaya, and David Roe}
\date{May 2024}
\subjclass[2020]{11Y16, 33C20 (primary), and 11G09, 11M38, 11T24 (secondary)}
\thanks{Costa and Roe were supported by the Simons Collaboration on Arithmetic Geometry, Number Theory, and Computation via Simons Foundation grant 550033.
Kedlaya was supported by NSF (DMS-1802161, 2053473), UC San Diego (Warschawski Professorship), and the Simons Foundation (Simons Fellows program, 2023--2024); he was also hosted by HIM (Bonn) during summer 2023 and IAS (Princeton) during fall 2023. We thank Will Sawin and Drew Sutherland for helpful discussions.}
\begin{document}

\begin{abstract}
We describe an algorithm for computing, for all primes $p \leq X$, the trace of Frobenius at $p$ of a hypergeometric motive over $\Q$ in time quasilinear in $X$.
This involves computing the trace modulo $p^e$ for suitable $e$;
as in our previous work treating the case $e=1$, we combine the Beukers--Cohen--Mellit trace formula with average polynomial time techniques of Harvey and Harvey--Sutherland.
The key new ingredient for $e>1$ is an expanded version of Harvey's ``generic prime'' construction, making it possible to incorporate certain $p$-adic transcendental functions into the computation; one of these is the $p$-adic Gamma function, whose average polynomial time computation is an intermediate step which may be of independent interest. We also provide an implementation in Sage and discuss the remaining computational issues around tabulating hypergeometric $L$-series.
\end{abstract}

\maketitle

\section{Introduction}

We continue the investigation begun in \cite{costa-kedlaya-roe20} of computational aspects of $L$-functions associated to \emph{hypergeometric motives} in the sense of \cite{roberts-rodriguez-villegas}.
These $L$-functions are easily accessed via the Beukers--Cohen--Mellit trace formula \cite{beukers-cohen-mellit-15} together with the $p$-adic expression of Gauss sums via the Gross-Koblitz formula \cite{gross-koblitz-79}.
While the trace formula has $O(p)$ terms, the main result of \cite{costa-kedlaya-roe20} gives a way to amortize the cost over $p$; that is, one obtains an efficient algorithm for computing, for all primes $p \leq X$, the mod-$p$ reduction of the trace of Frobenius at $p$ of a fixed hypergeometric motive in time quasilinear in $X$.

Here we do the same for the mod-$p^e$ reduction for any positive integer $e$, answering a question raised at the end of \cite{costa-kedlaya-roe20}. Since the trace is an integer in a known range (see \cref{rmk:sufficient precision}), this yields an algorithm for computing the exact trace. (See \cref{def:datum} for
terminology.)
\begin{theorem} \label{T:main}
Algorithm~\ref{alg:Hp overall}, on input of a Galois-stable hypergeometric datum $(\alpha, \beta) \in \Q^r \times \Q^r$, a parameter $z \in \Q \setminus \{0,1\}$, and a positive integer $X$, computes the hypergeometric trace $\Hp$ for all primes $p \leq X$ (excluding tame and wild primes).
Assuming that $r = O(\log X)$ and the bitlength of $z$ is $O(r^2 \log X)$,
the time and space complexities are respectively bounded by
\begin{gather*}
O(r^5 X (\log X)^3) \mbox{ bit operations and } O(r^5 X (\log X)^2) \mbox{ bits.}
\end{gather*}
\end{theorem}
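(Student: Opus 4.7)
The plan is to combine three main ingredients: the Beukers--Cohen--Mellit trace formula \cite{beukers-cohen-mellit-15}, which writes $\Hpfull$ as a sum of $O(p)$ terms built from $p$-adic Gauss sums at rational arguments determined by $\alpha,\beta$; the Gross--Koblitz formula \cite{gross-koblitz-79}, which expresses each such Gauss sum as a product of values of the $p$-adic Gamma function $\Gamma_p$; and Harvey's generic prime construction, extended here to $p$-adic precision $e > 1$, which enables evaluating an integer matrix product modulo $p^e$ for all primes $p \leq X$ in total quasilinear time.

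First I would fix, for each prime $p$, a precision $e = e(p)$ that suffices to recover $\Hpfull$ exactly from its reduction modulo $p^e$, as guided by \cref{rmk:sufficient precision}. The Weil bound confines $\Hpfull$ to an integer interval of size polynomial in $p$ whose degree depends on $r$, so $e = O(1 + r \log X / \log p)$ works and the aggregate precision budget is $O(r X \log X)$ bits, consistent with the target complexity.

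The heart of the proof, and the step I expect to be the main obstacle, is an average polynomial time routine for computing $\Gamma_p(a/b) \bmod p^e$ with $a/b \in \Q$ fixed and $p \leq X$ varying among primes not dividing the denominator of $a/b$. I would reformulate the products of $\Gamma_p$-values coming from Gross--Koblitz as entries of an iterated product of small integer matrices indexed by an integer range of length $O(p)$, and then feed this product into an accumulating remainder tree in the style of Harvey and Harvey--Sutherland. The case $e = 1$ handled in \cite{costa-kedlaya-roe20} only needed reduction modulo $p$; higher precision must contend with the $p$-adic transcendence of $\Gamma_p$, so the generic prime construction has to be widened to carry extra matrix coordinates that encode higher $p$-adic digits and propagate their error terms. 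Showing that these enlarged matrices still have entries of controlled bitsize and that the remainder tree retains its quasilinear complexity is the central technical task; the abstract flags the resulting algorithm for $p$-adic Gamma as an intermediate result of independent interest.

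With the tabulated $\Gamma_p$ values in hand, \cref{alg:Hp overall} computes $\Hpfull$ by looping through the BCM summation, combining the precomputed Gamma values with the appropriate power of $z$, and centering the mod-$p^e$ result to recover the exact integer trace; tame and wild primes are excluded by hypothesis. The complexity bound then reduces to bookkeeping: $X / \log X$ primes, each requiring $O(r)$ Gamma evaluations per summand at precision $O(r \log X)$ bits, combined via fast integer arithmetic with a further $\log X$ factor. Balancing these contributions delivers the stated $O(r^5 X (\log X)^3)$ bit operations and $O(r^5 X (\log X)^2)$ bits of space; the hypotheses $r = O(\log X)$ and the stated bound on the bitlength of $z$ ensure that factors involving $z^k$ and the accumulation step do not dominate.
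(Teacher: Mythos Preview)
Your proposal has a genuine gap that would prevent the algorithm from achieving quasilinear complexity in $X$.

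You identify the main obstacle as ``an average polynomial time routine for computing $\Gamma_p(a/b) \bmod p^e$ with $a/b \in \Q$ fixed and $p \leq X$ varying,'' and then, ``with the tabulated $\Gamma_p$ values in hand,'' you propose to compute $\Hp$ ``by looping through the BCM summation.'' But the BCM sum \eqref{eq:Hq} has $p-1$ terms, and the $m$-th term involves $\Gamma_p\bigl(\{\gamma+\tfrac{m}{1-p}\}\bigr)$ for each $\gamma\in\alpha\cup\beta$: the Gamma arguments depend on $m$, not just on a fixed rational $a/b$. There are thus $O(rp)$ distinct Gamma values per prime, and no finite table of $\Gamma_p(a/b)$ for fixed $a/b$ will cover them. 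Even granting such a table, ``looping through the summation'' costs $O(p)$ arithmetic per prime, so $\sum_{p\le X} p \sim X^2/(2\log X)$ overall, which is quadratic rather than quasilinear. Your final bookkeeping paragraph silently drops this factor of $p$.

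What the paper actually does is amortize the BCM sum itself, not just the Gamma values. One fixes $e=\lceil (w+1)/2\rceil$ (independent of $p$, cf.\ \cref{rmk:sufficient precision}), breaks $\{0,\dots,p-2\}$ into $O(r)$ ranges at the breakpoints $m_i=\lfloor\gamma_i(p-1)\rfloor$, and uses the functional equation \eqref{eq:gamma_feq} to write the ratio $P_{m+1}/P_m$ within a range essentially as a rational function of $m$ (up to a correction series in $p/(1-p)$ that is handled by the generic-prime trick). The running partial sum $\sum_{m} P_m$ is then encoded as an entry of a product of $2e\times 2e$ block-triangular matrices $A_{i,c}(1)\cdots A_{i,c}(k)$ over $\Z[k]$ (see \eqref{eq:matrix formula} and \eqref{eq:block decomposition}), and it is \emph{this} product that is fed into the accumulating remainder tree. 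The amortized $\Gamma_p$ computation you describe is indeed present (Algorithms~\ref{alg:dwork-mahler expansions1} and~\ref{alg:dwork-mahler expansions3}), but it is used only for the $O(r)$ boundary terms $P_{m_i}$ and the per-prime constants $c_{i,h}(p)$, not for the bulk of the sum.
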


As in \cite{costa-kedlaya-roe20}, the general strategy is to amortize the computational work over primes
using \emph{average polynomial time} techniques of Harvey~\cite{harvey-14, harvey-15} and Harvey--Sutherland \cite{harvey-sutherland-14,harvey-sutherland-16}.
In \cite{costa-kedlaya-roe20}, this amortization is achieved by expressing the trace formula in terms of a series of matrix products of a special form: we have a sequence of matrices defined over $\Z$,
and the desired quantity is obtained by truncating the product at an index depending (linearly) on $p$ and reducing modulo $p$.
One then uses an \emph{accumulating remainder tree}\footnote{In practice one uses accumulating remainder \emph{forests} for improved efficiency, especially with regard to memory usage. As we are using the technique as a black box, we ignore this distinction.} to compute the products and remainders in essentially linear time.

Since the expression we use for hypergeometric traces involves the $p$-adic Gamma function $\Gamma_p$, we first describe average polynomial time algorithms to compute this function. These reduce to applications of remainder trees quite close to the original work of
Costa--Gerbicz--Harvey \cite{costa-gerbicz-harvey-14} that inspired Harvey's work on $L$-functions, which may be of independent interest.

Our main algorithm uses a variant of Harvey's \emph{generic prime} construction \cite[\S 4.4]{harvey-15}.
In its simplest form, this consists of using matrices over the truncated polynomial ring $\Z[P]/(P^e)$, arranged in such a way that truncating the matrix product at an index depending on $p$ and specializing along the
map $\Z[P]/(P^e) \to \Z/(p^e)$ taking $P$ to $p$ yields the desired result. In practice, such a product can be handled by encoding it as a product of block matrices over $\Z$; see \cref{example:generic prime}.

Here, we adapt the construction of \cite{costa-kedlaya-roe20} to use a block lower triangular matrix to record both a product over $\Z[P]/(P^e)$ and the sum of the partial products.
Two new complications arise for $e>1$: we must perform an additional transformation on the product before adding it to the sum, and we must also incorporate the contribution of certain structural constants depending on $p$, which we treat as further unknowns. (These unknowns are derived from the series expansions for $\Gamma_p$ and from the difference between the evaluation point $z$ and the $(p-1)$-st root of unity in its mod-$p$ residue disc.)

As in \cite{costa-kedlaya-roe20}, we have implemented the algorithm described above (for arbitrary $e$) in \Sage{} \cite{sage} with some low-level code written in Cython, including a wrapper to Sutherland's C library rforest (wrapped in Cython) to implement \cref{thm:remainder tree}.
Some sample timings are included in \cref{table:timings}, including comparisons with \Sage{} and \Magma{} \cite{magma}; see \cref{sec:timings} for explanation.
These confirm that the quasilinear complexity shows up in practice, not just asymptotically.  Our code, which also includes the algorithm from \cite{costa-kedlaya-roe20} for $e=1$, is available on GitHub \cite{amortizedHGMrepo} at

\begin{center}
\href{https://github.com/edgarcosta/amortizedHGM}{\texttt{https://github.com/edgarcosta/amortizedHGM}}.
\end{center}

The broader context of our work is the desire to tabulate
hypergeometric $L$-functions at scale in the L-Functions and Modular Forms Database (LMFDB) \cite{lmfdb}, in part to investigate the \emph{murmurations} phenomenon for these $L$-functions
\cite{murmurations1, murmurations2}.
Our timings suggest that this prospect is within reach; we discuss this in \cref{sec:tabulation}. In particular, while there should exist an analogue of \cref{T:main} for $p^f$-Frobenius traces for any fixed $f>1$, it does not seem to be needed in practice.

\section{Accumulating remainder trees and generic primes}

We use accumulating remainder trees to amortize the computation of the trace formula, following \cite{costa-kedlaya-roe20}. As we use this construction as a black box, we recall only the structure of the input and output and the overall complexity estimates.

\begin{theorem} \label{thm:remainder tree}
Fix a positive integer $e$. Suppose we are given
\begin{itemize}
\item
a list of $r \times r$ matrices $A_0,\dots,A_{b-1}$ over $\Z$,
\item
a list of primes $p_1,\dots,p_c$, and
\item
a list of distinct cut points $0 \leq b_1,\dots,b_c \leq b$.
\end{itemize}
Let $B$ be an upper bound on the bit size of $\prod_{j=1}^c p_j$ and $H$ an upper bound
on the bit size of any $p_i^e$ or any entry of $A_i$.
Assume also that $\log r = O(H)$ and $r = O(\log b)$.
Then there is an algorithm that computes
\[
C_n \colonequals A_0 \cdots A_{b_n-1} \bmod{p_n^e} \qquad (1 \le n < c)
\]
with time complexity
\[
O(r^2(eB + bH)\log(eB + bH)\log b)
\]
and space complexity
\[
O(r^2(eB + bH)\log b).
\]
\end{theorem}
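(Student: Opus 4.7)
The plan is to adapt the accumulating remainder tree construction of Harvey and of Costa--Gerbicz--Harvey, as used in the $e=1$ case treated in \cite{costa-kedlaya-roe20}, to handle arbitrary $e \geq 1$. The setup uses two complementary binary trees: a \emph{product tree} built bottom-up over $A_0, \ldots, A_{b-1}$, whose internal nodes store matrix products over consecutive subranges and whose root is the full product; and a \emph{modulus tree} whose leaves hold $p_n^e$ and whose internal nodes hold the product of the $p_n^e$ in their subtree. An accumulating pass then descends through the product tree: at each internal node the inherited partial product is reduced modulo that node's modulus before proceeding into its two children. The distinct cut points $b_n$ single out at which leaf each desired $C_n$ appears.

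First I would build the product tree. Pairing adjacent matrices into subproducts and iterating gives $O(\log b)$ levels; a node spanning $k$ leaves holds an $r \times r$ matrix with entries of bit size $O(kH)$, using the assumption $\log r = O(H)$ to absorb the logarithms from the $r$ additions per entry. At each level the total bit size summed across all entries of all nodes is $O(r^2 bH)$, so with Kronecker substitution and FFT-based integer multiplication the level-to-level cost is $O(r^2 bH \log(bH))$, with additional factors of $r$ absorbed into polylogs under $r = O(\log b)$. Summing over levels yields the $bH$-contribution to the time complexity and a total bit footprint of $O(r^2 bH \log b)$, which contributes to the space bound.

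Next I would build the modulus tree and carry out the accumulating descent. The root has bit size $eB$, and at every level the total bit size of moduli is again $O(eB)$ since the children's sizes sum to the parent's. During the descent the partial product inherited at a node is first reduced modulo the $O(eB)$-bit node modulus; over a level this amounts to $O(r^2)$ divisions of $O(eB + bH)$-bit integers, costing $O(r^2 (eB + bH) \log(eB + bH))$ via fast division. Over $O(\log b)$ levels this gives the stated time bound, and the space bound follows from the fact that the live data at any moment is dominated by one root-to-leaf path in each tree together with the working remainders.

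The only substantive change from the $e=1$ case, and in my view the main bookkeeping obstacle, is tracking how the $e$-fold inflation of each modulus propagates through the analysis: one must check that the modulus tree has the same shape as for $e=1$ but with every bit size scaled by $e$, and that the entrywise reductions during the accumulating pass never produce intermediate quantities exceeding $O(eB + bH)$ bits before reduction. Once these checks are in place, the rest of the analysis from \cite{costa-kedlaya-roe20} transfers directly with $B$ replaced by $eB$ on the modulus side.
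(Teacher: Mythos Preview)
Your approach is correct and ultimately the same as the paper's: both observe that passing from $e=1$ to general $e$ amounts to replacing each modulus $p_n$ by $p_n^e$, i.e., replacing $B$ by $eB$ on the modulus side, after which the existing remainder-tree machinery applies unchanged. The paper's proof, however, is simply a one-sentence citation to \cite[Thm.~3.2]{harvey-sutherland-16} (with the notational translation from \cite{costa-kedlaya-roe20}), treating the construction entirely as a black box; your exposition of the product and modulus trees is accurate in outline but not needed for the argument. One minor caveat: your informal step of ``absorbing additional factors of $r$ into polylogs'' in the product-tree cost is precisely where the refinement of \cite{harvey-sutherland-16} over \cite{harvey-sutherland-14} is required to hit the stated bound, so you are implicitly relying on that citation in any case.
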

\begin{proof}
This follows from \cite[Thm. 3.2]{harvey-sutherland-16}\footnote{The complexities in \textit{loc. cit.} are stated in terms of the complexity of multiplying two $n$-bit integers. Per \cite{harvey-vanderhoeven19} we take this to be $O(n \log n)$.} (an improvement of \cite[Thm. 4.1]{harvey-sutherland-14}) via a change of notation as in \cite[Definition~3.1, Algorithm~2]{costa-kedlaya-roe20}. \end{proof}

\begin{example} \label{example:paradigm}
In practice, we will apply \cref{thm:remainder tree} in a restricted fashion.
\begin{itemize}
\item
The matrix $A_i$ will be the specialization of a single matrix $A$ over $\Z[k]$ at $k=i$, whose entries have degree $O(d)$ and coefficients of bit size $O(d \log X)$.
\item
The primes $p_i$ will all lie in a fixed arithmetic progression bounded by $X$.
\item
The cut point $b_i$ will be a linear function of $p_i$ (up to rounding) whose coefficients are $O(1)$.
\end{itemize}
We may then take $b = O(X)$, $B = O(X)$, and $H = O((d+e) \log X)$.
In particular, $eB+bH = O((d+e) X \log X)$; assuming that $d+e = O(X)$ and $r = O(\log X)$,
the time and space complexities in \cref{thm:remainder tree} become
respectively
\[
O((d+e)r^2 X (\log X)^3) \qquad
\mbox{ and } \qquad O((d+e)r^2 X (\log X)^2).
\]
\end{example}

\begin{example}
One basic instantiation of \cref{example:paradigm} is to batch-compute the quantities $(\lceil \gamma p \rceil - 1)!  \pmod{p^e}$
for some $\gamma \in (0,1] \cap \Q$. For instance, the case $e=2,\gamma=1$ is the focus of \cite{costa-gerbicz-harvey-14}.
\end{example}

\begin{example} \label{example:harmonic sums}
    Let $j$ be a positive integer and choose $\gamma \in (0,1]$.
    We may also use \cref{example:paradigm} to batch-compute the quantities
\begin{equation} \label{eq:Hjgamma}
H_{j,\gamma}(p) \pmod{p^e}, \qquad H_{j,\gamma}(p) \colonequals \sum_{i=1}^{\lceil \gamma p \rceil - 1} i^{-j}
\end{equation}
by interpreting them as
\[
H_{j,\gamma}(p) = S(p)_{21}/S(p)_{11}, \qquad
S(p) \colonequals \prod_{i=1}^{\lceil \gamma p \rceil - 1} \begin{pmatrix} i^j & 0 \\
1 & i^j \end{pmatrix}.
\]
\end{example}

\begin{remark} \label{R:select row}
In \cref{example:harmonic sums}, we can also write
\[
H_{j,\gamma}(p) = (VS(p))_{11}/(VS(p))_{12}, \qquad
V \colonequals \begin{pmatrix} 0 & 1 \end{pmatrix}.
\]
Prepending $V$ to the product yields some significant computational savings, by reducing the size of the intermediate products in the remainder tree computation.
\end{remark}

\begin{notation}\label{notation:coeff_notation}
We write $f^{[h]}$ for the coefficient of $x^h$ in the polynomial $f(x)$.
\end{notation}

\begin{example} \label{example:generic prime}
The paradigm of \cref{example:paradigm} excludes the possibility of computing expressions involving $p$ other than via the cut point.
Harvey's ``generic prime'' construction circumvents this issue by instead computing over $\Z[x]/(x^e)$, where $x$ is specialized to $p$ as a postprocessing step. We use a variant of this idea in \cref{sec:our algorithm}.

In lieu of implementing \cref{thm:remainder tree} with $\Z$ replaced by $\Z[x]/(x^e)$, we encode a matrix product over $\Z[x]/(x^e)$ via a lower triangular block representation
\begin{equation}
f \mapsto (f^{[h_1-h_2]})_{h_1,h_2=1,\dots,e}.
\end{equation}
One could alternatively represent polynomials via Sch\"onhage's method, i.e., giving their evaluations at a large power of 2 \cite{schonhage}.
In theory, this would improve our runtime by replacing matrix multiplication with integer multiplication.
However, in \cref{sec:our algorithm} we will manipulate triangular matrices in a way that seems incompatible with this strategy.
\end{example}

\begin{remark} \label{R:hypergeometric product}
\cref{example:harmonic sums} can be interpreted either as computing $\prod_{i=1}^{\lceil \gamma p \rceil-1} (i^j + x) \pmod{(x^2, p^e)}$ as in \cref{example:generic prime},
or as a ``hypergeometric'' construction using
\[
\prod_{i=1}^{\lceil \gamma p\rceil - 1} \begin{pmatrix}
g(i-1) & 0 \\
1 & f(i)
\end{pmatrix}
\mbox{ to compute } \sum_{i=1}^{\lceil \gamma p\rceil - 1} \prod_{j=1}^{i-1} \frac{f(j)}{g(j)},
\]
taking $f(i) \colonequals i^j, g(i) \colonequals (i+1)^j$. The latter form also covers the construction in \cite{costa-kedlaya-roe20} and inspires, but does not directly encompass, the construction in \cref{subsec:form of product}.
\end{remark}

\begin{remark} \label{R:projective}
In many cases, \cref{thm:remainder tree} is used in a ``projective'' manner, in that the matrix product is only needed up to scalar multiples
(e.g., all cases covered by \cref{R:hypergeometric product}, and the construction of  \cref{subsec:form of product}). It may be possible to exploit this to reduce the complexity of some intermediate matrices, but we did not pursue this.

To illustrate the potential savings, let us make these common factors explicit in \cref{example:harmonic sums}. In the ring $ \Z[x]/(x^2)$, $\prod_{i=1}^k (i^j + x)$ is divisible by $\prod_{i=1}^{k} \tilde{f}(i)$ where
\[
\tilde{f}(i) = \begin{cases} (i/p)^j & \mbox{if $i = p^e$ for some prime $p$ and some $e>0$,} \\
i^j & \mbox{otherwise}.
\end{cases}
\]
\end{remark}

\section{The \texorpdfstring{$p$}{p}-adic Gamma function}

We next recall some properties of the $p$-adic Gamma function, and then give an average polynomial time
algorithm to compute series expansions of it at rational arguments.
This may be of independent interest.

To avoid some complications, notably in \eqref{eq:Lipschitz continuity}, we  assume $p>2$.

\begin{definition}
\label{def:Gamma_p}
The (Morita) $p$-adic Gamma function is the unique continuous function $\Gamma_p\colon \Z_p \to \Z_p^\times $ which satisfies
\begin{equation}
\label{equation:padicgamma-integers}
    \Gamma_p(n+1) = (-1)^{n+1} \prod_{\substack{i=1 \\ (i,p) = 1}} ^n i  = (-1)^{n+1} \frac{\Gamma(n+1)}{p^{\lfloor n/p \rfloor} \Gamma(\lfloor n/p \rfloor + 1)}
\end{equation}
for all $n \in \Z_{\geq 0}$.
It satisfies the functional equations
\begin{gather}\label{eq:gamma_feq}
    \Gamma_p (x + 1) = \omega(x)\Gamma_p (x),
    \qquad
    \omega(x) \colonequals
    \begin{cases} -x & \text{if } x \in \Z_p^\times, \\
    -1 & \text{if } x \in p\Z_p,
    \end{cases} \\
    \label{eq:reflection identity}
\Gamma_p(x) \Gamma_p(1-x) = (-1)^{x_0},
\end{gather}
in which $x_0 \in \{1,\dots,p\}$ is congruent to $x \in \Z_p$ mod $p$. There is also an analogue of the Gauss multiplication formula \cite[\S VII.1.3]{robert-98}; it may be possible to use this to streamline our algorithms, but we have not pursued this.
\end{definition}

\begin{definition}
As in \cite[\S VII.1.6]{robert-98} or~\cite[\S 6.2]{rodriquez-villegas-07}, for any $a \in \Z_p$ the restriction of $\Gamma_p$ to the disc $a + p\Z_p$ admits a series expansion
\begin{equation} \label{eq:Gammap series}
\Gamma_p(x+a) = \sum_{i=0}^\infty c_i x^i \qquad (c_i \in \Z_p)
\end{equation}
 In particular, $\Gamma_p$ is Lipschitz continuous with $C = 1$, i.e.,
\begin{equation} \label{eq:Lipschitz continuity}
\left| \Gamma_p(x) - \Gamma_p(y)  \right|_p \leq \left| x - y\right|_{p}.
\end{equation}
\end{definition}

\begin{definition} \label{D:log gamma}
Let $\log\colon \Z_p^\times \to \Z_p$ denote the $p$-adic logarithm, which vanishes on roots of unity and is given by the usual power series on $1 + p\Z_p$.
Then \eqref{eq:Gammap series}  immediately implies that $\log \Gamma_p$ admits a series expansion around any $a \in \Z_p$
with coefficients in $\Z_p$.

For $\gamma \in (0,1]$ and $x \in p\Zp$, with $H_{j, \gamma}(p)$ as defined in \eqref{eq:Hjgamma}, \eqref{eq:gamma_feq} implies that
\begin{equation}
\begin{aligned} \label{eq:Gammap translate by a}
\log \frac{\Gamma_p(x+\lceil \gamma p \rceil)}{\Gamma_p(\lceil \gamma p \rceil)}
&= \log \frac{\Gamma_p(x)}{(\lceil \gamma p \rceil - 1)!}\prod_{i = 0}^{\lceil \gamma p \rceil-1} \frac{\Gamma_p(x + i +1)}{\Gamma_p(x+i)}
\\
&= \log \Gamma_p(x) +
\sum_{i=1}^{\lceil \gamma p \rceil-1} \log \left(1 + \frac{x}{i}\right) \\
&= \log \Gamma_p(x) -
\sum_{j=1}^{\infty} \frac{(-x)^j}{j} H_{j,\gamma}(p).
\end{aligned}
\end{equation}
\end{definition}

\begin{remark} \label{R:quadratic time}
We will use a na\"\i ve quadratic estimate for the runtime of computing the $p$-adic logarithm and exponential; while asymptotically these require quasilinear time \cite[\S 7,8]{bernstein-08}, \cite[\S 3]{caruso}, we will consider input sizes much smaller than the asymptotic crossover and so the quadratic estimates are more accurate in practice. Similar considerations will apply to integer multiplication outside of \cref{thm:remainder tree}.
\end{remark}

We first give an amortized computation of the expansion of $\Gamma_p$ around $0$.
It is equivalent, and will be convenient for later, to work instead with $\log \Gamma_p$.

\begin{algorithm}[ht] \label{alg:dwork-mahler expansions1}
\caption{Expansion of $\log \Gamma_p(x)$ modulo $p^e$ (\cref{T:Gamma expansions1})}
Apply \cref{thm:remainder tree} to compute $(p-1)! \pmod{p^e}$ for all $p \in T_X$.

For $j=1,\dots,e-2$, apply \cref{thm:remainder tree} as in \cref{example:harmonic sums} to compute $H_{j,1}(p) = \sum_{i=1}^{p-1} i^{-j} \pmod{p^{e-j}}$ for all $p \in T_X$.

Let $A$ be the upper triangular $(e-1) \times (e-1)$ matrix with $A_{ij} = \binom{j}{i-1}$ for $1 \leq i \leq j \leq e-1$.
For each $p \in T_X$, form the vectors $v,w \in (\Z/p^e\Z)^{e-1}$ given by
\[
v_j = \begin{cases} \log (-(p-1)!) & \mbox{if $j=1$} \\
\tfrac{(-1)^j}{j-1} p^{j-1} H_{j-1,1}(p) & \mbox{if $j>1$},
\end{cases} \qquad w = A^{-1} v.
\]
Then $\log \Gamma_p(py) \equiv \sum_{j=1}^{e-1} w_j y^j \pmod{p^e}$ in $\Z[y]$.
\end{algorithm}

\begin{theorem} \label{T:Gamma expansions1}
Let $T$ be the set of primes.
Then Algorithm~\ref{alg:dwork-mahler expansions1}  computes
the series expansions of $\log \Gamma_p(p y)$ in $\Z[y]/(p^e)$ for all primes $p \in T_X \colonequals T \cap [e+1, X]$,
with respective time and space complexities
\[
O(e^2 X (\log X)^3 + (e^4 \log e)X) \qquad
\mbox{ and } \qquad O(e X (\log X)^2 + e^2 X).
\]
\end{theorem}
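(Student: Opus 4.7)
The plan is to derive the linear system $Aw=v$ from the functional equation \eqref{eq:Gammap translate by a}, and then bound the cost by two applications of \cref{example:paradigm} together with per-prime post-processing. Set $G(y) \colonequals \log\Gamma_p(py)$; since \eqref{eq:Gammap series} gives a series expansion of $\log\Gamma_p$ and $\Gamma_p(0)=1$ (from \eqref{eq:reflection identity} and $\Gamma_p(1)=-1$), we have $G(y) \equiv \sum_{j=1}^{e-1} w_j y^j \pmod{p^e}$ for certain $w_j = c_j p^j$ with $c_j \in \Z_p$. Substituting $x=py$ and $\gamma=1$ into \eqref{eq:Gammap translate by a}, and using $\Gamma_p(p) = -(p-1)!$ from \eqref{equation:padicgamma-integers}, yields
\[
G(y+1) - G(y) = \log(-(p-1)!) - \sum_{j\geq 1} \frac{(-py)^j}{j} H_{j,1}(p).
\]
Expanding $(y+1)^j - y^j = \sum_{k=0}^{j-1} \binom{j}{k} y^k$ on the left and matching coefficients of $y^k$ for $k=0,\dots,e-2$ produces exactly the system $Aw=v$ written in the algorithm ($k=0$ gives the row $v_1$, and $k\geq 1$ gives $v_{k+1}$ after rewriting the sign). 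Since $A$ is upper triangular with diagonal $1,2,\dots,e-1$, we have $\det A = (e-1)!$, a unit modulo $p^e$ precisely because $p\in T_X$ forces $p\geq e+1$; back substitution therefore recovers the intended $w_j$.

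For the remainder tree phase, I would apply \cref{example:paradigm} twice. Batching $(p-1)! \pmod{p^e}$ over $T_X$ is the scalar instance ($r=1$, $d=O(1)$), costing $O(eX(\log X)^3)$ time and $O(eX(\log X)^2)$ space. For each $j=1,\dots,e-2$, batching $H_{j,1}(p) \pmod{p^{e-j}}$ is the $r=2$ construction of \cref{example:harmonic sums} with $d=j$ and working precision $e-j$, costing $O(eX(\log X)^3)$ time and $O(eX(\log X)^2)$ working space per $j$. Running the $O(e)$ passes sequentially keeps the peak working space at $O(eX(\log X)^2)$, while all outputs together fit in $O(e^2 X)$ bits since $\sum_{j=1}^{e-2}(e-j)|T_X|\log X = O(e^2 X)$. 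Summing accounts for the first summand of each stated bound.

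Finally, for the per-prime post-processing I would compute $v_1 = \log(-(p-1)!) \pmod{p^e}$ by truncating the $p$-adic log power series after $O(e)$ terms (the only ones with $v_p((u-1)^i/i)<e$), using the quadratic arithmetic of \cref{R:quadratic time}; the cost $O(e^3(\log X)^2)$ per prime is absorbed into the stated bound in both regimes of $e$ versus $\log X$. The remaining $v_j$ require only a multiplication by $p^{j-1}$ and inversion of $j-1<p$. Back substitution on $Aw=v$ is the dominant post-processing cost: each of the $O(e^2)$ inner multiplications pairs a small binomial entry of $A$ (bit size $O(e)$) with an element of $\Z/p^e\Z$ (bit size $O(e\log X)$), for $O(e^2\log X)$ quadratic bit operations, and the divisions by the diagonal entries $j\leq e-1$ contribute the extra $\log e$ factor; summing over the $O(X/\log X)$ primes gives the stated $O(e^4\log e\cdot X)$ term. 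The main bookkeeping to verify is that the reduced precision $e-j$ used in the remainder tree for $H_{j,1}(p)$ suffices: this is exactly right because $H_{j-1,1}(p)$ enters $v_j$ pre-multiplied by $p^{j-1}$, and the invertibility of $A$ modulo $p^e$ (rather than just modulo $p$) is what forces the assumption $p\geq e+1$ built into the definition of $T_X$.
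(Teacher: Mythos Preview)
Your proof is correct and follows essentially the same approach as the paper's: derive $Aw=v$ from the difference form of \eqref{eq:Gammap translate by a}, invert using that $A$ has diagonal $1,\dots,e-1$ (hence is a unit for $p\geq e+1$), and cost Steps~1--2 via $O(e)$ passes of \cref{example:paradigm}. One minor difference is in Step~3: the paper precomputes $A^{-1}$ once over~$\Q$ and bounds its entries by $O(e\log e)$ bits via their Bernoulli-number description, which is where the $\log e$ factor originates; your back-substitution with the $O(e)$-bit binomial entries of $A$ actually gives a slightly sharper multiplication cost, and your attribution of the residual $\log e$ to the diagonal divisions is looser but harmless since either accounting fits within the stated bound.
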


\begin{proof}
To see that the output is correct, fix a prime $p \geq e+1$.
For $j \not\equiv 0 \pmod{p-1}$, $H_{j,1}(p) \equiv 0 \pmod{p}$ and so $(p y)^j H_{j,1}(p) \equiv 0 \pmod{p^{j+1}}$. Hence the terms $j > e-2$ do not contribute modulo $p^e$ in \eqref{eq:Gammap translate by a}, so we may rewrite the latter as
\begin{equation*}
    \begin{aligned}
        \log \Gamma_p(p(y + 1)) - \log \Gamma_p(py)
                                                    &\equiv \log \bigl({ -(p-1)!}\bigr) - \sum_{j=1}^{e-2} \frac{(-p y)^j}{j} H_{j,1}(p) \pmod{p^{e}}\\
                                                &\equiv \sum_{j=1}^{e-1} v_j y^{j-1} \pmod{p^{e}}.
    \end{aligned}
\end{equation*}
Therefore, the right hand side is the image of $\log \Gamma_p(py)$ under the difference operator, which is represented by $A$. Since $A$ has diagonal entries $1,\dots,e-1$, it is invertible over $\Z_p$.
Since $\Gamma_p(0) = 1$,
$A^{-1} v$ contains the coefficients of the expansion of $\log \Gamma_p(py)$ in $y$.

For the complexity estimate, we may assume $e < X$ as otherwise there is nothing to do. We cover Steps 1 and 2 by applying \cref{thm:remainder tree} $e-1$ times as in \cref{example:paradigm} with a degree bound of $O(e)$.
This costs $O(e^2 X (\log X)^3)$ time and $O(eX(\log X)^2)$ space, plus $O(e^2 X)$ space to record the results.
In Step 3, for each of $O(X/\log X)$ primes $p$, we perform
one logarithm and $O(e^2)$ multiplications of an integer in $\Z/(p^e)$ with an entry of $A^{-1}$ of bitsize\footnote{This estimate follows by \href{https://oeis.org/A080779}{expressing} $A^{-1}$ in terms of Bernoulli numbers.} $O(e \log e)$; as per \cref{R:quadratic time} each multiplication costs $O(e^2 \log e \log X)$ time.
\end{proof}

\begin{remark}
One can extend Algorithm~\ref{alg:dwork-mahler expansions1} to
cover $p > \tfrac{e}{2}$ using the fact that $\log \Gamma_p$ is an odd series \cite[\S VII.1.5, Theorem]{robert-98}.
For smaller $p$, we can replace the use of interpolation in step 2 with
a direct application of \eqref{eq:Gammap translate by a}
to solve for the coefficients of $\Gamma_p(x)$. As we will assume $p>e$ later, we did not implement these steps.
\end{remark}

We next expand around other values $\gamma \in (0,1) \cap \Q$. For these values, it is more useful to retain neither $\Gamma_p(py+\gamma)$ nor its logarithm, but something in between.

\begin{algorithm}[ht] \label{alg:dwork-mahler expansions3}
\caption{Expansion of $\log \Gamma_p(x+\gamma)$ modulo $p^e$ (\cref{T:Gamma expansions})}
Use Algorithm~\ref{alg:dwork-mahler expansions1}
to compute $\log \Gamma_p(py) \pmod{p^e}$ for all $p \in T_X$.

For each $\gamma \in S_d$, use \cref{thm:remainder tree} to compute $\Gamma_p(\lceil \gamma p \rceil) = \pm (\lceil \gamma p\rceil -1)! \pmod{p^e}$ for all $p \in T_X$.

For each $\gamma \in S_d$, for $j=1,\dots,e-1$, apply \cref{thm:remainder tree} as in \cref{example:harmonic sums} to compute $H_{j,\gamma}(p) \pmod{p^{e-j}}$ for all $p \in T_X$.

For each $p \in T_X$ and $\gamma \in S_d$, use Step 1 and \eqref{eq:Gammap translate by a} to compute $
\log \tfrac{\Gamma_p(py+\lceil \gamma p \rceil)}{\Gamma_p(\lceil \gamma p \rceil)}$ as an element of $\Z[y]/(p^e)$.

For each $p \in T_X$ and $\gamma \in S_d$,
set $b \colonequals - (\gamma d)/ p \pmod{d}$.
Set $c_{\gamma,p} \colonequals \Gamma_p\bigl(\lceil \tfrac{b}{d} p \rceil\bigr)$ and $s_{\gamma,p}(py) \colonequals \log \Gamma_p\bigl(py+\lceil \tfrac{b}{d} p \rceil\bigr)/\Gamma_p\bigl(\lceil \tfrac{b}{d} p \rceil)\bigr|_{y=y-\tfrac{b}{d}}$.
\end{algorithm}

\begin{theorem} \label{T:Gamma expansions}
Fix integers $e,d \geq 2$.
Let $S_d$ be the set of $\gamma \in \Q \cap (0,1)$ of the form $\frac{c}{d}$ with $\gcd(c,d)=1$.
Let $T$ be the set of primes not dividing $d$.
Then Algorithm~\ref{alg:dwork-mahler expansions3} computes,
for all $\gamma \in S_d$ and all $p \in T_X \colonequals T \cap [e+1, X]$,
some quantities
\begin{itemize}
    \item $c_{\gamma,p} \in \Z/(p^e)$ and
    \item $s_{\gamma,p}(py) \in \Z[y]/(p^e)$
\end{itemize}
such that
\begin{equation*}
    \Gamma_p(py + \gamma) \equiv c_{\gamma,p} \exp s_{\gamma,p}(py) \pmod{p^e},
\end{equation*}
with respective time and space complexities bounded by
\[
O(de^2 X (\log X)^3 + de^4 (\log d + \log e) X) \qquad
\mbox{ and } \qquad O(e X (\log X)^2 + de^2 X).
\]
\end{theorem}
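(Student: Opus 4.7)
I plan to verify correctness via a shift identity and then bound the complexity step by step.

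\emph{Correctness.} Fix $\gamma = c/d \in S_d$ and $p \in T_X$. By construction, $b := -(\gamma d)/p \bmod d \in \{0, 1, \ldots, d-1\}$ satisfies $bp + c \equiv 0 \pmod d$; since $\gcd(c, d) = \gcd(p, d) = 1$, one obtains $\gcd(b, d) = 1$, so $b/d \in S_d$. Setting $n := \lceil (b/d) p \rceil$, a direct calculation shows $n = (bp + c)/d \in \{1, \ldots, p-1\}$ and $n \equiv \gamma \pmod p$, yielding the identity
\[
py + \gamma = p(y - b/d) + n.
\]
Applying \eqref{eq:Gammap translate by a} with $\gamma$ replaced by $b/d$ and $x = p(y - b/d)$ gives
\[
\log \frac{\Gamma_p(py + \gamma)}{\Gamma_p(n)} = \log \Gamma_p(p(y - b/d)) - \sum_{j=1}^{\infty} \frac{(-p(y - b/d))^j}{j} H_{j, b/d}(p).
\]
Since $p > e$, each $j \leq e-1$ satisfies $j < p$, so $j^{-1} \in \Z_p^\times$; and $(-p(y - b/d))^j$ has $p$-adic valuation at least $j$. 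Terms with $j \geq e$ therefore vanish modulo $p^e$, and the truncated right-hand side is exactly $s_{\gamma, p}(py)$ as defined in Step 5 (obtained from Step 4's output at the index $b/d \in S_d$ by substituting $y \mapsto y - b/d$). Combined with $c_{\gamma, p} = \Gamma_p(n)$, this gives the claimed congruence.

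\emph{Complexity.} Step 1 contributes $O(e^2 X (\log X)^3 + e^4 X \log e)$ time and $O(eX(\log X)^2 + e^2 X)$ space by \cref{T:Gamma expansions1}. Step 2 makes $O(d)$ invocations of \cref{thm:remainder tree} in the paradigm of \cref{example:paradigm} with $r = 1$ and constant polynomial degree, totaling $O(deX(\log X)^3)$ time. Step 3 makes $O(de)$ such invocations with $r = 2$, totaling $O(de^2 X(\log X)^3)$ time; storage for the $H_{j, \gamma}(p)$ uses $O(de^2 X)$ bits.

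For the per-prime arithmetic in Steps 4--5, at each of the $O(X/\log X)$ primes and $O(d)$ values of $\gamma$, Step 4 forms a polynomial of degree $\leq e-1$ in $\Z[y]/(p^e)$ from $\log \Gamma_p(py)$ and the $H_{j, \gamma}(p)$ using $O(e)$ ring operations, while Step 5 precomputes $d^{-1} \bmod p^e$ (once per prime) and carries out a Taylor shift by the rational $b/d$ using $O(e^2)$ multiplications. Rescaling polynomial coefficients by powers of $d$ reduces each such multiplication to one by an integer of bit size $O(\log d + \log e)$, which by \cref{R:quadratic time} costs $O(e^2 (\log d + \log e) \log X)$. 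Summing yields $O(de^4 (\log d + \log e) X)$ for this portion, matching the claimed bound.

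\emph{Main obstacle.} The delicate step is Step 5's Taylor shift: a naive implementation that treats $b/d$ as a generic element of $\Z/(p^e)$ would cost $\Theta((e\log X)^2)$ per multiplication, inflating the per-prime cost by an extra factor of $\log X$. One must instead exploit the short integer representation of $b/d$, in the spirit of how \cref{T:Gamma expansions1} uses the small-bit-size (Bernoulli-number) entries of $A^{-1}$.
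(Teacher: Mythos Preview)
Your argument is essentially the paper's: the correctness proof establishes the same shift identity $py+\gamma = p(y-b/d)+\lceil \tfrac{b}{d}p\rceil$ and then invokes \eqref{eq:Gammap translate by a} and \cref{T:Gamma expansions1}, and the complexity analysis follows the same step-by-step breakdown.

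One minor inconsistency worth flagging in your Step~5 estimate: the claim that rescaling by powers of $d$ yields multipliers of bit size $O(\log d + \log e)$ is not justified as stated. The Taylor-shift coefficients $\binom{j}{i}(-b)^{j-i}d^{\,\cdots}$ have bit size up to $\Theta(e\log d)$ (from $b^{j-i}$ with $j-i<e$) plus $\Theta(e)$ (from the binomial coefficient), not $O(\log d+\log e)$. Conversely, your stated per-multiplication cost $O(e^2(\log d+\log e)\log X)$ carries an extra factor of $e$ relative to what your claimed bit size would give via \cref{R:quadratic time}. These two slips cancel: using the correct multiplier bit size $O(e\log d)$, one gets exactly the paper's per-multiplication cost $O(e^2\log d\log X)$, and hence the same total $O(de^4\log d\,X)$ for Step~5, which combined with Step~1's $O(e^4\log e\,X)$ term yields the stated bound. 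So the conclusion is sound, but the ``main obstacle'' is resolved simply by multiplying by the rational $\binom{j}{i}(-b/d)^{j-i}$ of bit size $O(e\log d)$ rather than by any finer rescaling.
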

\begin{proof}
To see that the output is correct, we first show that $p y + \gamma =  p (y - b/d) +  \lceil \tfrac{b}{d} p \rceil$.
    Set $a \colonequals \lceil \tfrac{b}{d} p \rceil$, $a' \colonequals \gamma \pmod{p}$,
and define $b' \in \Z$ by $\gamma d = a'd - b'p$; then
$\tfrac{b'}{d} \in S_d$, $a' = \tfrac{b'}{d} p + \gamma = \lceil \tfrac{b'}{d} p \rceil$, and $\gamma d\equiv -b'p \pmod{d}$.
We deduce that $a=a', b=b'$ and hence $a \equiv \gamma \pmod{p}$, and $b = d(a-\gamma)/p$.
We thus reduce to \eqref{eq:Gammap translate by a} and \cref{T:Gamma expansions1}.

For the complexity estimate, we may again assume $e < X$. We cover Step 1 using \cref{T:Gamma expansions1}. We cover Steps 2 and 3 by applying \cref{thm:remainder tree} $O(de)$ times  as in \cref{example:paradigm} with a degree bound of $O(e)$. This costs $O(de^2 X (\log X)^3)$ time and
$O(e X (\log X)^2)$ space, plus $O(de^2 X)$ space to record the results; this cost also covers Step 4.
In Step 5, for each of $O(d)$ values of $b$ and $O(X/\log X)$ primes $p$, we perform a polynomial substitution using $O(e^2)$
multiplications of an element of $\Z/(p^e)$ by a rational of bitsize $O(e \log d)$;
as per \cref{R:quadratic time} each multiplication costs $O(e^2 \log d \log X)$ time.
\end{proof}

\begin{remark}
In practice, we obtain a speedup by a factor of $2$ in Algorithm~\ref{alg:dwork-mahler expansions3} by working over $S_d \cap (0,\tfrac{1}{2}]$ in Steps 1--4. In Step 5, if $b > \tfrac{d}{2}$,
in light of \eqref{eq:reflection identity}
we may set $c_{\gamma,p} \colonequals (-1)^{\lceil \gamma p \rceil} c_{1-\gamma,p}^{-1}$ and $s_{\gamma,p}(x) \colonequals -s_{1-\gamma,p}(-x)$.
\end{remark}

\begin{remark}
Note that computing $H_{1,\gamma}(p) \pmod{p^e}$ via \cref{example:harmonic sums} yields
$(\lceil \gamma p \rceil - 1)! \pmod{p^e}$ as a byproduct. In practice, in Step 2 of  Algorithm~\ref{alg:dwork-mahler expansions1} we compute $H_{1,1}$ mod $p^e$ rather than $p^{e-1}$, and we skip Step 1.
We similarly modify Steps 2 and 3 of  Algorithm~\ref{alg:dwork-mahler expansions3} when $e>1$.
\end{remark}

\section{The Beukers--Cohen--Mellit trace formula}

We summarize \cite[\S 2.2]{costa-kedlaya-roe20} primarily for the purpose of setting notation.

\begin{definition}
    \label{def:datum}
A \emph{hypergeometric datum} is a pair of disjoint tuples $\alpha=(\alpha_1,\ldots,\alpha_r)$ and $\beta=(\beta_1,\ldots,\beta_r)$
valued in $\Q \cap [0,1)$. Such a pair is \emph{Galois-stable} (or \emph{balanced}) if any two reduced fractions with the same denominator occur with the same multiplicity.

For the rest of the paper, fix a Galois-stable\footnote{Without the Galois-stable condition, much of this discussion carries over, but the resulting motives are defined not over $\Q$ but some cyclotomic field.} hypergeometric datum $\alpha,\beta$ and some $z \in \Q \setminus \{0,1\}$.
We say that a prime $p$ is \emph{wild} if it divides the denominator of some $\alpha_j$ or $\beta_j$; \emph{tame} if it is not wild but divides the numerator or denominator of $z$ or the numerator of $z-1$; and \emph{good} otherwise.
\end{definition}

\begin{definition}\label{def:zigzag function}
The \emph{zigzag function} $Z_{\alpha, \beta} : [0, 1] \to \Z$ is defined by
\[
Z_{\alpha, \beta}(x) \colonequals \#\{j\colon \alpha_j \leq x\} - \#\{j\colon \beta_j \leq x\}.
\]
In terms of the zigzag function, the \emph{minimal motivic weight} is given by
\begin{equation}\label{eq:mot_wt}
\begin{split}
w &= \max\{Z_{\alpha,\beta}(x)\colon x \in [0,1]\} -
\min\{Z_{\alpha,\beta}(x)\colon x \in [0,1]\} - 1 \\
&=  \max\{Z_{\alpha,\beta}(x)\colon x \in \alpha\} -
\min\{Z_{\alpha,\beta}(x)\colon x \in \beta\} - 1.
\end{split}
\end{equation}
Write $\{x\} \colonequals x - \lfloor x \rfloor$ for the fractional part of $x \in \Q$ and $\#S$ for the cardinality of a set $S$.  Set
\begin{align}
\label{eq:eta_m}
\eta_m(x_1,\ldots,x_r) &\colonequals\sum_{j=1}^r\left(\left\{x_j+\tfrac{m}{1-p}\right\}-\left\{x_j\right\}\right), \\
\label{eq:xi}\xi_m(\beta) &\colonequals\#\{j:\beta_j=0\}-\#\left\{j:\beta_j+\tfrac{m}{1-p}=0\right\},  \\
\label{eq:D}
D &\colonequals \tfrac{1}{2}(w + 1 - \#\{j: \beta_j=0\}).
\end{align}
\end{definition}

\begin{definition} \label{D:hypergeometric trace}
For $p$ prime,
define a $p$-adic analogue of the Pochhammer symbol by setting
\begin{equation}\label{eq:poch_def}
(x)_{m}^* \colonequals \frac{\Gamma_p\left(\left\{ x+\tfrac{m}{1-p}\right\}\right) }{\Gamma_p(\{x\})}.
\end{equation}
Let $[z] \in \Z_p$ be the unique $(p-1)$-st root of unity congruent to $z$ modulo $p$; note that this should not be confused with the notation $f^{[h]}$ defined in \cref{notation:coeff_notation}.
As in~\cite[\S~2]{watkins-15}, for $p$ good we write
\begin{equation}\label{eq:Hq}
\Hpfull \colonequals \frac{1}{1-p}\sum_{m=0}^{p-2}(-p)^{\eta_m(\alpha)-\eta_m(\beta)}p^{D + \xi_m(\beta)} \left(\gammaprod (\gamma)_m^* \right)[z]^m,
\end{equation}
where $\gammaprod f(\gamma)$ is shorthand for $\prod_{j=1}^r \tfrac{f(\alpha_j)}{f(\beta_j)}$.
\end{definition}

By combining \cite[Theorem~1.5]{beukers-cohen-mellit-15} (an adaptation of \cite[\S 8.2]{katz-90}) with the Gross--Koblitz formula \cite[\S VII.2.6]{robert-98} as described in \cite{watkins-15}, one establishes the following.

\begin{theorem}\label{thm:trace of Frobenius}
Assume\footnote{This point was neglected in  \cite{costa-kedlaya-roe20}.} that $0 \notin \alpha$. Then there exists a motive $M^{\alpha,\beta}_z$ over $\Q$,
pure of weight $w$ and dimension $r$,
such that  for each good prime $p$, $M^{\alpha,\beta}_z$ has good reduction at $p$ and
\[
\Hpfull = \tr(\frob | M^{\alpha, \beta} _{z} ) \in \Z \cap [-r p^{w/2},rp^{w/2}].
\]
\end{theorem}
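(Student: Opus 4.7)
The plan is to assemble three well-established ingredients in the order suggested by the preamble to the theorem: Katz's construction of hypergeometric motives, the Beukers--Cohen--Mellit finite-field trace formula, and the Gross--Koblitz conversion from Gauss sums to $p$-adic Gamma values. The bulk of the work is bookkeeping of normalizations.

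First, I would invoke the construction in \cite[\S 8.2]{katz-90} (as refined in \cite{beukers-cohen-mellit-15} and \cite{roberts-rodriguez-villegas}) to produce a hypergeometric motive $M^{\alpha,\beta}_z$ over some cyclotomic field, pure of weight $w$ and dimension $r$. The Galois-stability of $(\alpha,\beta)$ is precisely the condition needed for the system of $\ell$-adic realizations to be stable under $\mathrm{Gal}(\overline{\Q}/\Q)$, so the motive descends to $\Q$. By the definition of tame and wild primes in \cref{def:datum}, the construction yields a smooth hypergeometric family away from these primes, so $M^{\alpha,\beta}_z$ has good reduction at every good prime $p$.

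Second, \cite[Theorem~1.5]{beukers-cohen-mellit-15} expresses $\tr(\frob_p \mid M^{\alpha,\beta}_z)$ as a sum over characters $\omega^m$ of $\F_p^\times$ (indexed by $m \in \{0,\dots,p-2\}$) of a product of Gauss sums associated to $\alpha$ and $\beta$, twisted by $\omega^m(z)$. I would then substitute the Gross--Koblitz formula \cite[\S VII.2.6]{robert-98} for each Gauss sum to rewrite the product as a $\gammaprod$ of $p$-adic Gamma values evaluated at fractional parts $\{\alpha_j + m/(1-p)\}$ and $\{\beta_j + m/(1-p)\}$. Grouping these into the ratios $(\gamma)_m^*$ of \eqref{eq:poch_def} and tracking the associated powers of $p$ (which contribute $\eta_m$ and $\xi_m$) together with the sign factors (which combine with $\omega^m(z)$ to yield $[z]^m$ via the Teichm\"uller lift) produces the formula \eqref{eq:Hq}. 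The hypothesis $0 \notin \alpha$ is what allows one to sidestep the ambiguity in applying Gross--Koblitz on the $\alpha$-side when a character becomes trivial; this is exactly the oversight flagged in the footnote.

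Finally, the containment $\Hpfull \in \Z \cap [-rp^{w/2}, rp^{w/2}]$ follows from two general facts: rationality of the motive over $\Q$ forces the characteristic polynomial of $\frob_p$ on $M^{\alpha,\beta}_z$ to have integer coefficients, and the Weil--Deligne bounds applied to a pure motive of weight $w$ and dimension $r$ bound each of its $r$ Frobenius eigenvalues in absolute value by $p^{w/2}$. I expect the main obstacle to be not any of the individual steps, but rather the careful reconciliation of normalizations (sign conventions for $\Gamma_p$, the prefactor $\tfrac{1}{1-p}$, the role of $D$, and the choice of Teichm\"uller representative $[z]$) between the form in which Beukers--Cohen--Mellit state their theorem and the symmetric form \eqref{eq:Hq} used here; the translation carried out in \cite{watkins-15} is exactly this reconciliation, and would be the reference I would cite to close the calculation.
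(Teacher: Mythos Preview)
Your proposal is correct and follows essentially the same approach as the paper: the paper does not give a self-contained proof but rather the one-sentence preamble ``By combining \cite[Theorem~1.5]{beukers-cohen-mellit-15} (an adaptation of \cite[\S 8.2]{katz-90}) with the Gross--Koblitz formula \cite[\S VII.2.6]{robert-98} as described in \cite{watkins-15}, one establishes the following,'' and your write-up is a faithful expansion of precisely those ingredients, together with the standard Weil-bound argument for the range of the trace.
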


\begin{remark} \label{rmk:sufficient precision}
The bound on $\Hp$ implies that
for $p > 4r^2$, $\Hp$ is determined by its reduction modulo $p^e$ for $e = \lceil (w+1)/2 \rceil$.
\end{remark}

\begin{remark} \label{rmk:swap alpha beta}
The definition of $M^{\alpha,\beta}_z$ does not itself require $0 \notin \alpha$, only the validity of the trace formula as written. In general, there is an isomorphism $M^{\alpha,\beta}_z \cong M^{\beta,\alpha}_{1/z}$, which in the case $0 \notin \alpha,\beta$
corresponds to a symmetry in \eqref{eq:Hq}: the substitution $[z] \mapsto [1/z]$ carries the summand indexed by $m$ to the summand indexed by $p-1-m$.
The term-by-term equality can be seen from \eqref{eq:gamma_feq}, taking care about signs.

When $0 \in \alpha$, we currently compute Frobenius traces by applying \cref{thm:trace of Frobenius} to $M^{\beta,\alpha}_{1/z}$. It would be preferable to adapt \eqref{eq:Hq} to handle this case directly, so as to free up the swap of $\alpha$ and $\beta$ for other uses (see \cref{R:clear denominators}).
\end{remark}

\section{Computation of hypergeometric traces}
\label{sec:our algorithm}

We next exhibit an algorithm (Algorithm~\ref{alg:Hp overall}) that, on input of $\alpha,\beta,X,e$, computes
$\Hp \pmod{p^e}$ for all primes $p \leq X$ excluding tame\footnote{We can also handle tame primes where $z \in \Z_p^\times$. In particular, we can take $z=1$.} and wild primes.
The complexity analysis of this algorithm will yield \cref{T:main}; see \cref{subsec:complexity}.

It is harmless to also exclude a finite set of small good primes, as they can be handled easily by directly computing \eqref{eq:Hq} modulo a suitable power of $p$.
We will restrict attention to $p$ with
\begin{equation} \label{eq:lower bound on p}
 \max\{e, d(d-1)\} < p \leq X,
\end{equation}
where $d$ is the maximum of the denominators of $\alpha \cup \beta$.

To help navigate some heavy notation, we summarize it in the following table.

\begin{center}
\begin{tabular}{cccccc}
Symbol & Reference & Symbol & Reference & Symbol & Reference \\
\hline
$f^{[h]}$ & \cref{notation:coeff_notation}  & $\Gamma_p(x)$ & Def.~\ref{def:Gamma_p} & $\omega(x)$ & \eqref{eq:gamma_feq} \\
$\alpha,\beta,r,z$ & Def.~\ref{def:datum}
& $\{x\}, w, \eta_m, \xi_m, D$ & Def.~\ref{def:zigzag function} &
$H_p$, $[z]$, $\gammaprod$ & Def.~\ref{D:hypergeometric trace} \\
\hline
$a_i, b_i, r_i, c$ & \eqref{eq:ai bi ri} &
$f_{i,c}(k), g_{i,c}(k)$ & \eqref{eq:define fick} &
$P_m, P'_m$ & \eqref{eq:Pm} \\
$A_{i,c}(k)$ & \eqref{eq:matrix formula}
& $\gamma_i, m_i$ & \eqref{eq:distinct breaks} & $Q_{h_1,h_2}(k)$ & \eqref{eq:matrix entries} \\
$c_{i,h}(p)$ & \eqref{eq:precomputed quantities} & $\gamma_{i,c}$ & \eqref{eq:gamma ic}  & $R_{i}(x)$ & \eqref{eq:Rikx}  \\
$\delta_{h_1,h_2}$ & \eqref{eq:matrix formula} &
$h_c(\gamma, \gamma_i)$ & \eqref{eq:hc gamma} &$\sigma_i, \tau_i$ & \eqref{eq:sigma tau}\\
$\epsilon_c(\gamma, \gamma_i)$ & \eqref{eq:epsilon c} & $\iota(x,y)$ & \eqref{eq:iota xy}& $S_{i}(p)$ & \eqref{eq:tilde Sip}\\
$e_i, e'_i, \overline{\sigma}_i, \overline{\tau}_i$ & \eqref{eq:separate sigma} & $k,m$ & \eqref{eq:floor expression} \\
\end{tabular}
\end{center}

\subsection{Breaking the sum into ranges}
We start with a high-level breakdown of the algorithm, in which $e$ plays only a minor role.
For $m=0,\dots,p-2$, define
\begin{equation} \label{eq:Pm}
P_m \colonequals [z]^m \gammaprod (\gamma)^*_m, \qquad
P'_m \colonequals (-p)^{\eta_m(\alpha)-\eta_m(\beta)} p^{D + \xi_m(\beta)} P_m.
\end{equation}
Let $0 = \gamma_0 < \cdots < \gamma_s = 1$ be the distinct elements in $\alpha \cup \beta \cup \{0,1\}$.
Write $m_i$ for $\lfloor \gamma_i(p-1) \rfloor$;
by \eqref{eq:lower bound on p}, we also have
\begin{equation} \label{eq:distinct breaks}
0 = m_0 < \cdots < m_s = p-1.
\end{equation}
By \cite[Lemma~4.2]{costa-kedlaya-roe20},
there exist integers $\sigma_i, \tau_i$ for $0 \leq i \leq s-1$ such that
\begin{equation} \label{eq:sigma tau}
\frac{P'_m}{P_m} = \begin{cases} \tau_i & \mbox{if $m = m_i$,} \\
\sigma_i & \mbox{if $m_i < m < m_{i+1}$};
\end{cases}
\end{equation}
in fact we can choose $e_i, e_i' \in \{1,\dots,e\}$
and $\overline{\sigma}_i, \overline{\tau}_i \in \{-1,0,1\}$ such that for all $p$,
\begin{equation} \label{eq:separate sigma}
\sigma_i \equiv p^{e-e_i} \overline{\sigma}_i \pmod{p^e}, \qquad \tau_i \equiv p^{e-e'_i} \overline{\tau}_i \pmod{p^e}.
\end{equation}
In this notation, we summarize the method in Algorithm~\ref{alg:Hp overall}.

\begin{algorithm} \label{alg:Hp overall}
\caption{Computation of $\Hp$ for good $p$ satisfying \eqref{eq:lower bound on p}}
Apply Algorithm~\ref{alg:dwork-mahler expansions3} to each $d$ dividing the common denominator of $\gamma_i$ and $\gamma_j$ for some $i,j \in \{0,\dots,s\}$ (not necessarily distinct).

For each $i \in \{0,\dots,s-1\}$ for which $\overline{\tau}_i \neq 0$, for each $p$, compute $\overline{\tau}_i P_{m_i} \pmod{p^{e'_i}}$ as indicated in \cref{subsec:compute Pmi}.

For each $i \in \{0,\dots,s-1\}$ for which $\overline{\sigma}_i \neq 0$, for each $p$, compute the quantities $c_{i,h}(p) \pmod{p^{e_i-h}}$ as indicated in \eqref{eq:precomputed quantities}.

For each $i \in \{0,\dots,s-1\}$ for which $\overline{\sigma}_i \neq 0$, compute $\overline{\sigma}_i \sum_{m=m_i+1}^{m_{i+1}-1} P_m \pmod{p^{e_i}}$ in terms of the quantities $c_{i,h}(p)$ as indicated in \cref{subsec:form of product}.

For each $p$, compute $\Hp \pmod{p^e}$ by rewriting \eqref{eq:Hq} in the form
\begin{equation}
\Hpfull \equiv \frac{1}{1-p} \sum_{i=0}^{s-1} \left( p^{e-e'_i} \overline{\tau}_i P_{m_i} + p^{e-e_i} \overline{\sigma}_i \sum_{m=m_i+1}^{m_{i+1}-1} P_{m} \right) \pmod{p^e}.
\end{equation}
\end{algorithm}

\subsection{Residue classes}
\label{subsec:residue classes}

We next separate primes into residue classes modulo the denominator of $\gamma_i$.
Following \cite[Lemma~4.1]{costa-kedlaya-roe20},
define
\begin{equation} \label{eq:iota xy}
  \iota(x, y) \colonequals \begin{cases} 1 & \mbox{if $x \le y$} \\ 0 & \mbox{if $x > y$.} \end{cases}
\end{equation}
Write $\gamma_i = \frac{a_i}{b_i}$ and define an integer $r_i \in \{0, \dots,b_i-1\}$ by
\begin{equation} \label{eq:ai bi ri}
a_i(p-1) = m_i b_i + r_i;
\end{equation}
for $p \equiv c \pmod{b_i}$ with $c \in (\Z/b_i\Z)^\times$, $r_i$ is the residue of $a_i(c-1)$ mod $b_i$. We have
\begin{equation} \label{eq:gamma ic}
m_i = \gamma_i (p-1) - \gamma_{i,c}, \qquad \gamma_{i,c} \colonequals \tfrac{r_i}{b_i}.
\end{equation}
For the remainder of \S\ref{sec:our algorithm}, we fix an index $i \in \{0,\dots,s-1\}$ and a quantity $c \in (\Z/b_i \Z)^\times$, and limit attention to primes $p \equiv c \pmod{b_i}$.

For $\gamma \in \alpha \cup \beta$, we analyze $\left\{ \gamma + \tfrac{m}{1-p} \right\}$ in terms of
\begin{equation} \label{eq:hc gamma}
h_c(\gamma, \gamma_i) \colonequals \gamma - \gamma_i + \iota(\gamma, \gamma_i) - \gamma_{i,c} \in (-1, 1].
\end{equation}
For $m = m_i + k$ with $1 \leq k \leq m_{i+1} - m_i$,
by \cite[(5.11)]{costa-kedlaya-roe20} we have
\begin{align}
\label{eq:floor expression}
\left\{\gamma + \tfrac{m}{1-p} \right\} &=
\gamma + \tfrac{m}{1-p} + \iota(\gamma, \gamma_i) \\
\nonumber
&= h_c(\gamma,\gamma_i) + (k - p\gamma_{i,c}) \tfrac{1}{1-p} \\
\nonumber
&= h_c(\gamma,\gamma_i) + k + (k - \gamma_{i,c}) \tfrac{p}{1-p}.
\end{align}
For $k=0$, we instead have
\begin{align}
\label{eq:fractional part edge case}
\left\{\gamma + \tfrac{m_i}{1-p} \right\} &= h_c(\gamma,\gamma_i)  - \tfrac{p}{1-p} \gamma_{i,c} - \epsilon_c(\gamma, \gamma_i) \\
\label{eq:epsilon c}
\mbox{ where } \epsilon_c(\gamma, \gamma_i) &\colonequals \iota(\gamma, \gamma_i) - \iota(\gamma,\gamma_i - \tfrac{1}{p-1} \gamma_{i,c}) =
\begin{cases} 1 & \mbox{if $\gamma_i = \gamma$} \\ 0 & \mbox{otherwise.}
\end{cases}
\end{align}
(We would also have $\epsilon_c(\gamma,\gamma_i)=1$ if
$\gamma_i - \frac{1}{p-1}\gamma_{i,c} < \gamma < \gamma_i$, but this would imply $\lfloor \gamma(p-1) \rfloor = \lfloor \gamma_i \rfloor$ in violation of \eqref{eq:distinct breaks}.)

We make explicit a point that was elided in
\cite[Lemma~5.10]{costa-kedlaya-roe20}.

\begin{lemma} \label{eq:not div by p}
Let $m_i \leq m < m_{i+1}$ and $\gamma \in \alpha \cup \beta$, and suppose that $\left\{\gamma + \tfrac{m}{1-p} \right\} \equiv 0 \pmod{p}$.  Then either
\begin{align*}
m &= m_i \mbox{ and } h_c(\gamma,\gamma_i) = \epsilon_c(\gamma,\gamma_i), \mbox{ or } \\
m &= m_{i+1} - 1 \mbox{ and } h_c(\gamma,\gamma_{i+1}) = \epsilon_c(\gamma,\gamma_{i+1})+1.
\end{align*}
\end{lemma}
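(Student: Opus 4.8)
The plan is to use the explicit expressions for $\left\{\gamma+\tfrac{m}{1-p}\right\}$ from \eqref{eq:floor expression} and \eqref{eq:fractional part edge case}, reduce them modulo $p$, and track when the result can vanish given the constraints on $p$ coming from \eqref{eq:lower bound on p}. First I would write $m = m_i + k$ with $0 \le k \le m_{i+1}-m_i - 1$ and split into the edge case $k=0$ and the generic case $1 \le k \le m_{i+1}-m_i-1$.

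For the generic case $1 \le k$, by \eqref{eq:floor expression} we have $\left\{\gamma+\tfrac{m}{1-p}\right\} = h_c(\gamma,\gamma_i) + k + (k-\gamma_{i,c})\tfrac{p}{1-p}$. Reducing modulo $p$ (using that $\tfrac{1}{1-p} \equiv 1 \pmod p$ and that the displayed quantity lies in $\Z_p$), this is congruent to $h_c(\gamma,\gamma_i) + k$. Now $h_c(\gamma,\gamma_i) = \gamma - \gamma_i + \iota(\gamma,\gamma_i) - \gamma_{i,c}$ has bounded denominator dividing $d(d-1)$ or so, and $h_c(\gamma,\gamma_i) + k \in (-1,1] + k$; I would argue that for this rational number to be $\equiv 0 \pmod p$ with $p > d(d-1)$, it must actually equal $0$ as a rational — since its denominator is coprime to $p$ and its numerator would have to be divisible by $p$ while being bounded in absolute value by something smaller than $p$. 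Then $h_c(\gamma,\gamma_i) + k = 0$ with $k \ge 1$ forces $h_c(\gamma,\gamma_i) = -k \le -1$, but $h_c(\gamma,\gamma_i) \in (-1,1]$, so $k=1$ and $h_c(\gamma,\gamma_i) = -1$; I then need to convert "$m = m_i + 1$ with $h_c(\gamma,\gamma_i) = -1$" into the claimed "$m = m_{i+1}-1$ with $h_c(\gamma,\gamma_{i+1}) = \epsilon_c(\gamma,\gamma_{i+1}) + 1$." This conversion is where I expect to spend the most effort: I would show that $h_c(\gamma,\gamma_i) = -1$ together with \eqref{eq:distinct breaks} forces $m_{i+1} = m_i + 2$ (so that $m_i + 1 = m_{i+1}-1$), by noting $h_c(\gamma,\gamma_i) = -1$ means $\gamma$ sits at a very specific location relative to the break $\gamma_i$, essentially that $\gamma = \gamma_{i+1}$ up to the residue-class correction, and then relate $h_c(\gamma,\gamma_i)$ to $h_c(\gamma,\gamma_{i+1})$ using $\gamma_{i+1} - \gamma_i$ and the definitions in \eqref{eq:hc gamma}, \eqref{eq:gamma ic}, \eqref{eq:ai bi ri}, checking the $\iota$ and $\epsilon_c$ bookkeeping carefully.

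For the edge case $k=0$, by \eqref{eq:fractional part edge case} we have $\left\{\gamma+\tfrac{m_i}{1-p}\right\} = h_c(\gamma,\gamma_i) - \tfrac{p}{1-p}\gamma_{i,c} - \epsilon_c(\gamma,\gamma_i)$, which reduces mod $p$ to $h_c(\gamma,\gamma_i) - \epsilon_c(\gamma,\gamma_i)$. The same bounded-denominator/bounded-numerator argument forces this rational to be exactly $0$, giving the first alternative $m = m_i$ and $h_c(\gamma,\gamma_i) = \epsilon_c(\gamma,\gamma_i)$ directly. I would double-check that the two conclusions are genuinely exhaustive, i.e. that the "exact vanishing" deduction is uniform: the key quantitative input is that every $h_c(\gamma,\gamma_i)$ has denominator dividing $\mathrm{lcm}$ of the $b_j$'s times $(p-1)$-free data — more precisely, writing things over the common denominator $d$ one sees the numerator is bounded by a constant times $d$, hence by \eqref{eq:lower bound on p} smaller than $p$. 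The main obstacle, as noted, is the second case: massaging "$h_c(\gamma,\gamma_i)=-1$" into a statement phrased in terms of the \emph{next} break $\gamma_{i+1}$, which requires carefully re-deriving $h_c(\gamma,\gamma_{i+1})$ and $\epsilon_c(\gamma,\gamma_{i+1})$ and invoking \eqref{eq:distinct breaks} to rule out intermediate breaks; everything else is a short reduction mod $p$ plus a size estimate.
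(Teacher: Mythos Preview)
Your treatment of the edge case $k=0$ is fine and matches the paper, but the generic case $1\le k\le m_{i+1}-m_i-1$ has a genuine gap.

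You reduce $\left\{\gamma+\tfrac{m}{1-p}\right\}$ modulo $p$ to $h_c(\gamma,\gamma_i)+k$ (correct), then assert that this rational must \emph{vanish} because its numerator is ``bounded in absolute value by something smaller than $p$''. That size bound is false: $k$ ranges up to $m_{i+1}-m_i-1$, which is roughly $(\gamma_{i+1}-\gamma_i)(p-1)$, so after clearing the bounded denominator $D$ of $h_c(\gamma,\gamma_i)$ the numerator $D\bigl(h_c(\gamma,\gamma_i)+k\bigr)$ can be several multiples of $p$. Thus the congruence $h_c+k\equiv 0\pmod p$ does not force $h_c+k=0$; it equally allows $h_c+k=p/D,\,2p/D,\dots$, and your subsequent step (``so $k=1$ and $h_c(\gamma,\gamma_i)=-1$'') is built on this false premise. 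Indeed $h_c(\gamma,\gamma_i)=-1$ is excluded by \eqref{eq:hc gamma}, so your argument would end up showing the second alternative of the lemma never occurs, which is not what is claimed.

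The paper handles $m_i<m<m_{i+1}$ by a structurally different route that does not attempt a size bound on $h_c+k$. Writing $\gamma+\iota(\gamma,\gamma_i)=a/d$ in lowest terms, the hypothesis becomes $a+md\equiv 0\pmod p$, i.e.\ $a+md=(a-t)p$ with $0\le t\le a<2d$. Since $\gcd(a-t,d)=1$, the fraction $(a-t)/d$ has the \emph{same} reduced denominator as $\gamma$, and Galois-stability of the datum forces $\{(a-t)/d\}$ to be one of the break points $\gamma_j$. Taking floors in $(\gamma_j+\delta)(p-1)=m+t/d$ then pins $m$ down to $m_{i+1}-1$ and identifies $\gamma_j+\delta$ with $\gamma_{i+1}$, after which the relation $h_c(\gamma,\gamma_{i+1})=\epsilon_c(\gamma,\gamma_{i+1})+1$ drops out directly. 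Note that Galois-stability is essential here and is never invoked in your outline; without it there is no reason the new fraction $(a-t)/d$ should land on a break point, and the argument does not close.
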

\begin{proof}
If $m = m_i$, then by \eqref{eq:fractional part edge case}, $\left\{\gamma + \tfrac{m}{1-p} \right\} \equiv h_c(\gamma,\gamma_i) - \epsilon_c(\gamma,\gamma_i) \pmod{p}$.
Since $\epsilon_c(\gamma,\gamma_i) = 1$ implies $\gamma = \gamma_i$
and hence $h_c(\gamma,\gamma_i) = 1 - \gamma_{i,c} \geq 0$, $h_c(\gamma,\gamma_i) - \epsilon_c(\gamma,\gamma_i)$ is in $\Q \cap (-1, 1]$ with denominator at most $d(d-1)$, and so by \eqref{eq:lower bound on p} can only be divisible by $p$ if it is zero.

Now assume that $m_i < m < m_{i+1}$ and $\left\{\gamma + \tfrac{m}{1-p} \right\} \equiv 0 \pmod{p}$.
Write $\gamma +\iota(\gamma,\gamma_i) = \tfrac{a}{d}$ with $0 \leq a < 2d$ and $\gcd(a,d) = 1$.
By our hypothesis plus \eqref{eq:floor expression},
\begin{equation*}
    \frac{a}{d} = \gamma +\iota(\gamma,\gamma_i)  = \left\{\gamma + \tfrac{m}{1-p} \right\} + \frac{m}{p-1}.
\end{equation*}
Thus,
\begin{equation} \label{eq:tpdef}
a + md = (a-t)p
\end{equation}
for some $t \in \Z_{\ge 0}$, and $t \le a$ since $a + md$ is non-negative.
Furthermore, since every divisor of $a + md$ is prime to $d$, we see that $\gcd(a-t,d) = 1$,
which by \cref{def:datum} ensures that $\tfrac{a-t}{d} = \gamma_j + \delta$ for some $\delta \in \{0, 1\}$ and some $j \in \{0,\dots,s\}$.
Rewriting \eqref{eq:tpdef} in the form $(a-t)(p-1) = md+t$, we get
\begin{equation*}
    (\gamma_j + \delta) (p-1) = m + \tfrac{t}{d}.
\end{equation*}
Taking floors on both sides we obtain
\begin{equation*}
    m = \delta(p-1) + m_j - \lfloor \tfrac{t}{d} \rfloor.
\end{equation*}
Since $0 \le t \le a < 2d$, this gives a contradiction unless $t \geq d$ and either $\delta = 0$, $m = m_{i+1}-1$, and $m_j = m_{i+1}$
or $\delta = 1$, $m = m_s-1$, and $m_j = 0$; in either case, \eqref{eq:distinct breaks} implies $\gamma_j +\delta = \gamma_{i+1}$. Then \eqref{eq:tpdef} becomes
\[
\gamma + \iota(\gamma, \gamma_i) = \frac{a}{d} = \gamma_{i+1}p - m_{i+1}+1 = \gamma_{i+1} + \gamma_{i+1,c} + 1.
\]
Since $\iota(\gamma, \gamma_i) = \iota(\gamma, \gamma_{i+1}) - \epsilon_c(\gamma, \gamma_{i+1})$,
we obtain $h_c(\gamma, \gamma_{i+1}) -\epsilon_c(\gamma, \gamma_{i+1}) = 1$.
\end{proof}

\subsection{Computation of \texorpdfstring{$P_{m_i}$}{Pmi}}
\label{subsec:compute Pmi}

We next elaborate Step 2 of Algorithm~\ref{alg:Hp overall}.
 Similar ideas will also be used in Step 3; see \cref{subsec:more factorization}.

To begin with, compute $z^m \pmod{p^e}$ by repeated squaring.
Since $\log [z] = 0$, we may then obtain $[z]^m \pmod{p^e}$ by writing
\begin{equation}\label{eq:mult lift serie}
\left( \tfrac{[z]}{z} \right)^m = \exp\left(\tfrac{m}{1-p} \log z^{p-1}\right) \equiv \sum_{h=0}^{e-1} \frac{1}{h!} \left( \tfrac{1}{1-p} \log z^{p-1} \right)^h
m^h \pmod{p^e}.
\end{equation}

Next, use the output of Step~1 of Algorithm~\ref{alg:Hp overall}
to recover
\begin{equation} \label{eq:gamma base}
\gammaprod \Gamma_p(\gamma)  \pmod{p^e}
\end{equation}
and, for each $\gamma \in \alpha \cup \beta$, a constant $c_{i,\gamma,p}$ and a series $s_{i,\gamma,p}(x)$ such that for all $x \equiv 0 \pmod{p}$,
\begin{equation} \label{eq:precomputation result not normalized}
c_{i,\gamma,p} \exp s_{i,\gamma,p}(x) \equiv \Gamma_p(x+\{h_c(\gamma,\gamma_i)\}) \pmod{p^e}.
\end{equation}
Next, use \eqref{eq:gamma_feq}, \cref{eq:not div by p}, and \eqref{eq:precomputation result not normalized},
keeping in mind that $h_c(\gamma, \gamma_i)-\epsilon_c(\gamma,\gamma_i) \in (-1,1]$, to obtain an analogous representation of
\begin{equation} \label{eq:gamma product series mi}
\gammaprod \Gamma_p\left(x+h_c(\gamma, \gamma_i) - \epsilon_c(\gamma,\gamma_i) \right) \pmod{p^e}.
\end{equation}
Finally, compute $P_{m_i}$ as $z^{m_i} \pmod{p^e}$, times \eqref{eq:mult lift serie} evaluated at $m = m_i$, times \eqref{eq:gamma product series mi} evaluated at $x = \gamma_{i,c}\tfrac{p}{1-p}$, divided by \eqref{eq:gamma base}.

\begin{remark}
For the most part, in practice we make these computations modulo
$p^{e_i'}$ rather than $p^e$. The sole exception is \eqref{eq:gamma base}, which we use again in \eqref{eq:precomputed quantities}. That said, the balanced condition ensures that \eqref{eq:gamma base} is a fourth root of unity \cite[\S VII.1.3, Lemma]{robert-98}, so it suffices to compute it modulo $p$.
\end{remark}

\subsection{Factorization of the quotient}
\label{subec:factorization}

Before continuing through the remaining steps of Algorithm~\ref{alg:Hp overall}, we give a high-level description of what these steps are doing, then link this back to \cite{costa-kedlaya-roe20}.

We will define in  \cref{subsec:form of product} a block triangular matrix $A_{i,c}(k)$ over $\Z[k]$ for which
\begin{equation} \label{eq:block decomposition}
A_{i,c}(1) \cdots A_{i,c}(k) =
\begin{pmatrix}
\Delta & 0 \\
\Sigma & \Pi
\end{pmatrix},
\end{equation}
where $\Delta$ is a scalar matrix, $\Delta^{-1} \Sigma$ ``records'' $\overline{\sigma}_i \sum_{j=1}^{k} P_{m_i+j} \pmod{p^{e_i}}$ and $\Delta^{-1} \Pi$ ``records''
$P_{m_i+k+1} \pmod{p^{e_i}}$ in a sense to be specified later.
We then apply \cref{thm:remainder tree} and \cref{example:paradigm}, noting the dependence of $m_{i+1}-m_i$ on $p$, to compute
\begin{equation} \label{eq:tilde Sip}
S_{i}(p) \colonequals A_{i,c}(1) \cdots A_{i,c}(m_{i+1}-m_i-1) \pmod{p^{e_i}}
\end{equation}
for all $p$ at once, then extract the desired sum from $S_i(p)$ for each $p$ separately. (As in \cref{R:projective}, we are using the matrix product in a ``projective'' fashion.)

Let us recall how this was done for $e_i=1$ in \cite{costa-kedlaya-roe20}. For $m \colonequals m_i+k$, write
\begin{equation}
\label{eq:factorization of product}
\frac{P_{m}}{P_{m_i+1}} = [z]^{k-1} \gammaprod \frac{\Gamma_p\left(\left\{\gamma + \frac{m}{1-p}\right\}\right)}{\Gamma_p\left(\left\{\gamma + \frac{m_i+1}{1-p}\right\}\right)}
= [z]^{k-1} \gammaprod \frac{\Gamma_p\left(h_c(\gamma,\gamma_i) + k + \frac{(k-\gamma_{i,c})p}{1-p} \right)}{\Gamma_p\left(h_c(\gamma,\gamma_i) + 1 + \frac{(1 - \gamma_{i,c})p}{1-p} \right)}.
\end{equation}
As in \cite[Definition~5.7]{costa-kedlaya-roe20}, choose a positive integer $b$ to ensure that
\begin{equation}
\label{eq:define fick}
f_{i,c}(k) \colonequals b \prod_{j=1}^r \left(h_c(\alpha_j, \gamma_i) + k\right), \qquad
g_{i,c}(k) \colonequals b \prod_{j=1}^r \left(h_c(\beta_j, \gamma_i) + k\right)
\end{equation}
belong to $\Z[k]$.
By \eqref{eq:gamma_feq}, \cref{eq:not div by p},  and \eqref{eq:factorization of product},
\begin{equation}
\frac{P_{m_i+k+1}}{P_{m_i+k}} \equiv z \frac{f_{i,c}(k)}{g_{i,c}(k)} \pmod{p} \qquad (k=1,\dots,m_{i+1}-m_i-1).
\end{equation}
Write $z = \frac{z_f}{z_g}$ in lowest terms, then set
\begin{equation} \label{eq:mod p matrix}
A_{i,c}(k) \colonequals \begin{pmatrix} z_g g_{i,c}(k) & 0 \\
\overline{\sigma}_i z_g g_{i,c}(k) & z_f f_{i,c}(k) \end{pmatrix}
\equiv \mathrm{(scalar)} \begin{pmatrix} 1 & 0 \\
\overline{\sigma}_i & \tfrac{P_{m+1}}{P_{m}} \end{pmatrix} \pmod{p};
\end{equation}
we then have
\begin{equation}
\overline{\sigma}_i \sum_{m=m_i+1}^{m_{i+1}-1} P_{m} \equiv P_{m_i+1} \frac{S_i(p)_{21}}{S_i(p)_{11}} \pmod{p}.
\end{equation}

\begin{remark} \label{R:chained product}
In \cite{costa-kedlaya-roe20}, the matrices $S_i(p)$ are chained together into a single product, interleaved with connecting matrices $T_i(p)$ to account for the summands $P'_{m_i}$. We originally implemented a similar approach for $e>1$; while this saves some work at certain stages, it forces some intermediate computations to be done modulo $p^e$ rather than $p^{e_i}$, and this is disadvantageous especially with regard to memory usage. The present approach also allows more of the work to be treated as a precomputation; see \cref{sec:timings}.
\end{remark}

\subsection{More factorization of the quotient}
\label{subsec:more factorization}

To upgrade the previous discussion to handle $e_i > 1$,
we make an additional separation of factors in \eqref{eq:factorization of product},
in order to decouple the effect of shifting the argument of $\Gamma_p$ by a multiple of $p$ (accounted for by Step 3 of Algorithm~\ref{alg:Hp overall}, described below) from the effect of shifting the argument by 1 (accounted for by Step 4 of Algorithm~\ref{alg:Hp overall}, described in \cref{subsec:form of product}).

 We first observe that
in the ring $(\Frac \Z_p[k])[x]/(x^{e_i})$, we have
\begin{equation} \label{eq:Gamma argument shift}
\frac{f_{i,c}(x+k)}{g_{i,c}(x+k)}
= \gammaprod \frac{\Gamma_p(x+h_c(\gamma,\gamma_i)+k+1)}{\Gamma_p(x+h_c(\gamma,\gamma_i)+k)}.
\end{equation}
If we then define the power series
\begin{align} \label{eq:Rikx}
R_i(x) &\colonequals
\gammaprod \frac{\Gamma_p\left(x+h_c(\gamma, \gamma_i) + 1 \right)}{\Gamma_p\left(h_c(\gamma, \gamma_i) + 1 \right)},
\end{align}
then we can rewrite \eqref{eq:factorization of product} as
\begin{equation} \label{eq:update from R}
\frac{P_{m_i+k}}{P_{m_i+1}} =
 \left(\tfrac{[z]}{z} \right)^{k-1} \frac{R_i(\left( k - \gamma_{i,c} \right)\tfrac{p}{1-p})}{R_i(\left( 1 - \gamma_{i,c} \right)\tfrac{p}{1-p})} \cdot \prod_{j=1}^{k-1} \left. z\tfrac{f_{i,c}(x+j)}{g_{i,c}(x+j)}\right|_{x=( k - \gamma_{i,c}) \tfrac{p}{1-p}}.
\end{equation}
The terms in \eqref{eq:update from R} not involving $j$ depend on $k$ in a usefully simple way: there exist quantities $c_{i,h}(p) \in \Z/p^{e_i-h} \Z$ for $h=0,\dots,e_i-1$ such that for all $k$,
\begin{equation} \label{eq:precomputed quantities}
 \left(\tfrac{[z]}{z} \right)^{k-1} \frac{R_i(\left( k - \gamma_{i,c}\right)\tfrac{p}{1-p})}{R_i(\left( 1 - \gamma_{i,c} \right)\tfrac{p}{1-p})} P_{m_i+1}\equiv
\sum_{h=0}^{e-1} c_{i,h}(p) \left( (k-\gamma_{i,c}) \tfrac{p}{1-p}\right)^h
\pmod{p^{e_i}}.
\end{equation}

\begin{remark}
In lieu of \eqref{eq:update from R} one could try to use the factorization
\begin{equation}
\frac{P_{m_i+k}}{P_{m_i+1}} =
\prod_{j=1}^{k-1} \left. z\tfrac{f_{i,c}(x+j)}{g_{i,c}(x+j)}\right|_{x=( 1- \gamma_{i,c}) \tfrac{p}{1-p}} \cdot  \left(\tfrac{[z]}{z} \right)^{k-1} \cdots,
\end{equation}
but this does not achieve the requisite decoupling; in particular the second factor does not admit a useful representation in terms of $k$.
\end{remark}

Step 3 of Algorithm~\ref{alg:Hp overall} is to compute the $c_{i,h}(p)$ following the approach of \cref{subsec:compute Pmi}. First,
use \eqref{eq:gamma_feq}, \cref{eq:not div by p}, and \eqref{eq:precomputation result not normalized},
now noting that $h_c(\gamma, \gamma_i)+1 \in (0,2]$, to obtain a representation in the form $c \exp(s(x))$ of
\begin{equation} \label{eq:gamma product series mi1}
\gammaprod \Gamma_p\left(x+h_c(\gamma, \gamma_i) +1 \right) \pmod{p^{e_i}}.
\end{equation}
Then compute \eqref{eq:precomputed quantities} as $z^{m_i+1} \pmod{p^{e_i}}$, times \eqref{eq:mult lift serie} evaluated at $m = m_i + k$, times \eqref{eq:gamma product series mi1} evaluated at $x = (k-\gamma_{i,c})\tfrac{p}{1-p}$, divided by \eqref{eq:gamma base}.

\subsection{Form of the matrix product}
\label{subsec:form of product}

We now perform Step 4 of Algorithm~\ref{alg:Hp overall}, using  \eqref{eq:precomputed quantities} to express the desired sum in terms of a matrix product. Rewrite \eqref{eq:update from R} as
\begin{align} \label{eq:update from R factored}
P_{m_i+k}& \equiv \sum_{h=0}^{e_i-1} c_{i,h}(p) \left( (k-\gamma_{i,c}) \tfrac{p}{1-p} \right)^h \cdot \prod_{j=1}^{k-1} \left. z\tfrac{f_{i,c}(x+j)}{g_{i,c}(x+j)}\right|_{x=( k - \gamma_{i,c}) \tfrac{p}{1-p}} \\
 \label{eq:computed term}
&\equiv \sum_{h_1=0}^{e_i-1}  \sum_{h_2=h_1}^{e_i-1} c_{i,h_1}(p)  Q_{h_1,h_2}(k) \left(\tfrac{p}{1-p} \right)^{h_2} \pmod{p^{e_i}},
\end{align}
where using the notation $f^{[h]}$ from \cref{notation:coeff_notation}
we write
\begin{align}
\label{eq:matrix entries}
Q_{h_1,h_2}(k) &\colonequals
( k - \gamma_{i,c})^{h_2} \left(\prod_{j=1}^{k-1} z \tfrac{f_{i,c}(x+j)}{g_{i,c}(x+j)}\right)^{[h_2-h_1]}.
\end{align}
This prompts us to define the block matrix $A_{i,c}(k)$ with $e_i \times e_i$ blocks as follows, where $\delta_{h_1,h_2}$ is the Kronecker delta and the scalar is chosen to clear denominators
(see \cref{R:clear denominators}):
\begin{equation} \label{eq:matrix formula}
A_{i,c}(k) \colonequals \mbox{(scalar)} \begin{pmatrix}
\delta_{h_1,h_2} & 0 \\
\overline{\sigma}_i (k-\gamma_{i,c})^{e_i-h_2} \delta_{h_1,h_2} &
\left(z\tfrac{f_{i,c}(x+k)}{g_{i,c}(x+k)}\right)^{[h_1-h_2]}
\end{pmatrix};
\end{equation}
note that the factors of $k-\gamma_{i,c}$ appear in the bottom left rather than the bottom right.
As promised, we apply \cref{thm:remainder tree} and \cref{example:paradigm} to compute the products $S_i(p)$ as in \eqref{eq:tilde Sip}; writing $S_i(p)$ as a block matrix in the notation of \eqref{eq:block decomposition}, we have
\begin{align}
(\Delta^{-1} \Sigma)_{h_1,h_2} &= \overline{\sigma}_i \sum_{k=1}^{m_{i+1}-m_i-1} Q_{e_i-h_1,e_i-h_2}(k).
\end{align}
In terms of the $1 \times e_i$ row vectors $v,w$ given by
\begin{equation}
v_j := c_{i,e_i-j}(p), \qquad w_j := (\tfrac{p}{1-p})^{e_i-j},
\end{equation}
by \eqref{eq:computed term} we have
\begin{align} \label{eq:Sip21 second form}
\overline{\sigma}_i \sum_{m=m_i+1}^{m_{i+1}-1} P_{m} &\equiv \frac{1}{\Delta_{e_i,e_i}}
(v \Sigma w^T)_{11}.
\end{align}

\begin{remark} \label{R:select rows}
As in \cref{R:select row}, in practice we achieve a constant factor speedup by computing not $S_i(p)$ but $VS_i(p)$ where $V$ is the matrix consisting of the last $e_i+1$ rows of the $2e_i \times 2e_i$ identity matrix.
\end{remark}

\begin{remark}
According to \eqref{eq:Sip21 second form}, for $j=1,\dots,e_i$ we only need to compute columns $j$ and $e_i+j$ of $S_i(p)$ modulo $p^j$.
However, in practice there seems to be little overhead incurred by computing all of $S_i(p)$ modulo $p^{e_i}$.
\end{remark}

\begin{remark} \label{R:clear denominators}
The scalar in \eqref{eq:matrix formula}
can be bounded by $b_i^{e_i-1} g_{i,c}(k) \rad(g_{i,c}(k))^{e_i-1}$,
where $\rad(g_{i,c}(k))$ is the radical of $g_{i,c}(k)$.
The latter arises from clearing denominators in the series expansion of $g_{i,c}(x+k)^{-1} \pmod{x^{e_i}}$; e.g., for $e_i=2$,
\begin{equation}
A_{i,c}(k) = \mbox{(scalar)} \begin{pmatrix} 1 & 0 & 0 & 0 \\
0 & 1 & 0 & 0 \\
\overline{\sigma}_i(k-\gamma_{i,c}) & 0 & z \tfrac{f_{i,c}(k)}{g_{i,c}(k)} & 0 \\
0 & \overline{\sigma}_i &  z \tfrac{f'_{i,c}(k)}{g_{i,c}(k)} - z \tfrac{f_{i,c}(k) g'_{i,c}(k)}{g_{i,c}(k)^2} &  z \tfrac{f_{i,c}(k)}{g_{i,c}(k)}
\end{pmatrix}.
\end{equation}
It is tempting to try to restructure the computation so that the top left corner of $A_{i,c}(k)$ is used to track the product over $g_{i,c}(x+k)$. This is complicated by the need to sum $\left(l-\gamma_{i,c}\right)^{h_1} \left( \prod_{j=1}^k z \tfrac{f_{i,c}(x+j)}{g_{i,c}(x+j)}\right)^{[h_2]}$ for $h_1 > h_2$ to incorporate the $c_{i,h}(p)$; it would also mean retaining all of the rows of the product, rather than only $e_i+1$ of them as in \cref{R:select rows}.

In any case, the dependence on $\rad(g_{i,c}(k))$ means that our algorithm performs much better in cases where $\beta$ has many repeated entries. In particular, it is sometimes very profitable to swap $\alpha$ and $\beta$ when possible (see \cref{rmk:swap alpha beta}).
\end{remark}

\subsection{Complexity estimates}
\label{subsec:complexity}

We conclude by analyzing the complexity of Algorithm~\ref{alg:Hp overall}. This will imply \cref{T:main} by taking $e = \lceil (w+1)/2 \rceil \leq r-1$ and invoking \cref{rmk:sufficient precision}.

We first note that $\varphi(b_i) \leq r$ and so\footnote{By \cite[Theorem~15]{rosser-schoenfeld}, $\varphi(n) \geq \frac{n}{2 \log \log n + 3/\log \log n}$ for $n > 2$.} $b_i = O(r \log \log r)$. From this we see that the sum of all integers that occur as $\lcm(b_i, b_j)$ for some $i,j$ is $O(r^2 (\log \log r)^2)$.
Applying \cref{T:Gamma expansions}, we bound the time complexity of Step 1 by
\[
O(r^2 (\log \log r)^2 e^2 X (\log X)^3 + r^2 (\log \log r)^2 e^4 (\log r + \log e) X)
\]
and the space complexity by
\[
O(e X (\log X)^2 + r^2 (\log \log r)^2 e^2 X).
\]

Steps 2 and 3 include no amortized steps, so their space costs are negligible.
For each of $O(X/\log X)$ primes $p$, we perform $O(r+e^2)$ operations in $\Z/(p^e)$. At a time cost of $O(e^2 (\log X)^2)$ apiece per \cref{R:quadratic time}, this runs to $O((re^2 + e^4) X \log X)$.

Step 4 is dominated by $O(r)$ applications
of \cref{thm:remainder tree} via \cref{example:paradigm}.
Since the matrix $A_{i,c}(k)$ has size $2e_i\times 2e_i$
and its entries have degree $O(e_i r)$
(the factor of $e_i$ coming from \cref{R:clear denominators}),
we bound the time and space costs by
\[
O(e^3 r^2 X (\log X)^3)
\quad \mbox{and} \quad
O(e^3 r^2 X (\log X)^2)
\]
provided that $e = O(r)$, $r = O(\log X)$, and the bitsize of $z$ is $O(er \log X)$. Combining these estimates (and replacing $e$ with $r$) yields \cref{T:main}.

\section{Implementation notes and sample timings}
\label{sec:timings}

We have implemented Algorithm~\ref{alg:Hp overall} in \Sage,
using Cython for certain inner loops and for a wrapper to Sutherland's rforest C library.
While it would be natural to implement a multithreaded approach in certain steps (particularly the calculations not amortized over $p$), we have not implemented this.

We report timings in \cref{table:timings} and divide our algorithm into three phases:
\begin{enumerate}
\item
Step 1 of Algorithm~\ref{alg:Hp overall}. This phase is independent of $z$ and depends only mildly on $\alpha$ and $\beta$.
\item
Steps 2 and 3 of Algorithm~\ref{alg:Hp overall}, in both cases assuming $z=1$. This phase includes no amortized steps and is independent of $z$.
\item
Step 4 of Algorithm~\ref{alg:Hp overall},
plus adjustment of the results of Steps 2 and 3 to account for $z$.
\end{enumerate}

When possible, we also include comparison timings with the built-in functions for computing $\Hp$ in \Sage{} and \Magma{}, denoted ``\Sage($p$)'' and ``\Magma($p$)'' in the table. These are not amortized, and so these runtimes are quasilinear in $X^2$ rather than $X$. These builtin functions also compute Frobenius traces at higher prime powers $q$ (see \cite[(2.22)]{costa-kedlaya-roe20} for the analogous formula), which are needed for tabulation of $L$-functions when $r > 2$ (see \cref{sec:tabulation}); these are denoted
``\Sage($q$)'' and ``\Magma($q$)'' in the table, and the runtimes are quasilinear in $X^{3/2}$.

For $e=1$, we also include a comparison with the ``chained product'' approach of \cite{costa-kedlaya-roe20} (see \cref{R:chained product}). The two approaches perform comparably for individual calculations, but for bulk calculations Algorithm~\ref{alg:Hp overall} is clearly superior.

We use the following hypergeometric data (computing the weight $w$ as in \eqref{eq:mot_wt}):
\begin{center}
\begin{tabular}{ccccc}
$e$ & $r$ & $w$ & $\alpha$ & $\beta$ \\
\hline \\[-6pt]
$1$ & $2$ & $1$ & $(\tfrac{1}{4}, \tfrac{3}{4})$ & $(\tfrac{1}{6}, \tfrac{5}{6})$ \\[2pt]
$1$ & $4$ & $1$ & $(\tfrac{1}{10}, \tfrac{3}{10}, \tfrac{7}{10}, \tfrac{9}{10})$ & $(\tfrac{1}{6}, \tfrac{1}{6}, \tfrac{5}{6}, \tfrac{5}{6})$ \\[2pt]
$2$ & $4$ & $3$ & $(\tfrac{1}{4}, \tfrac{1}{3}, \tfrac{2}{3}, \tfrac{3}{4})$ & $(\tfrac{1}{6}, \tfrac{1}{6}, \tfrac{5}{6}, \tfrac{5}{6})$ \\[2pt]
$3$ & $6$ & $5$ & $(\tfrac{1}{5}, \tfrac{2}{5}, \tfrac{1}{2}, \tfrac{1}{2}, \tfrac{3}{5}, \tfrac{4}{5})$ & $(\tfrac{1}{6}, \tfrac{1}{6}, \tfrac{1}{6}, \tfrac{5}{6}, \tfrac{5}{6}, \tfrac{5}{6})$ \\[2pt]
$4$ & $8$ & $7$ & $(\tfrac{1}{5}, \tfrac{1}{3}, \tfrac{2}{5}, \tfrac{1}{2}, \tfrac{1}{2}, \tfrac{3}{5}, \tfrac{2}{3}, \tfrac{4}{5})$ &
$(\tfrac{1}{6}, \tfrac{1}{6}, \tfrac{1}{6}, \tfrac{1}{6}, \tfrac{5}{6}, \tfrac{5}{6}, \tfrac{5}{6}, \tfrac{5}{6})$ \\
\end{tabular}
\end{center}
We take the specialization point $z = \tfrac{314}{159}$; note that in \Magma{} one must input $z^{-1}$ instead of $z$.

Timings are reported in 5.1GHz Intel i9-12900K core-seconds, running \Sage{} 10.1 and \Magma{} 2.28-5.
Memory usage was limited to 20GB.

\begin{table}[p]
\small
\caption{Timings, see \S \ref{sec:timings} for explanation.}
\label{table:timings}
\begin{tabular}{c|ccccccccccc}
    $e=1, r=2$ & $2^{15}$ & $2^{16}$ & $2^{17}$ & $2^{18}$ & $2^{19}$ & $2^{20}$ & $2^{21}$ & $2^{22}$ & $2^{23}$ & $2^{24}$ & $2^{25}$\\
\hline
\cite{costa-kedlaya-roe20} & 0.06 & 0.11 & 0.21 & 0.48 & 1.20 & 2.84 & 6.80 & 16.0 & 37.6 & 88.5 & 226
\\
\hline
Phase (1) & 0.08 & 0.13 & 0.24 & 0.49 & 0.98 & 2.26 & 8.15 & 17.9 & 32.5 & 65.6 & 157
\\
Phase (2) & 0.03 & 0.05 & 0.10 & 0.19 & 0.38 & 0.71 & 3.20 & 5.54 & 9.44 & 17.1 & 28.2
\\
Phase (3) & 0.04 & 0.07 & 0.12 & 0.24 & 0.48 & 1.02 & 2.30 & 5.44 & 11.2 & 24.8 & 63.3
\\
\hdashline
Total & 0.15 & 0.25 & 0.46 & 0.92 & 1.84 & 3.98 & 13.6 & 28.9 & 53.1 & 107 & 248
\\
\hline
\Sage ($p$) & 40.6 & 150 & 572
\\
\Magma ($p$) & 36.7 & 147 & 604
\\
\hline
\\
$e=1, r=4$ & $2^{13}$ & $2^{14}$ & $2^{15}$ & $2^{16}$ & $2^{17}$ & $2^{18}$ & $2^{19}$ & $2^{20}$ & $2^{21}$ & $2^{22}$ & $2^{23}$\\
\hline
\cite{costa-kedlaya-roe20} & 0.10 & 0.15 & 0.24 & 0.39 & 0.82 & 1.86 & 4.28 & 10.3 & 24.7 & 55.0 & 127
\\
\hline
Phase (1) & 0.06 & 0.08 & 0.11 & 0.19 & 0.34 & 0.66 & 1.34 & 3.18 & 9.94 & 17.6 & 38.6
\\
Phase (2) & 0.02 & 0.03 & 0.06 & 0.11 & 0.24 & 0.45 & 0.87 & 1.67 & 3.42 & 6.85 & 16.6
\\
Phase (3) & 0.03 & 0.04 & 0.07 & 0.14 & 0.25 & 0.54 & 1.17 & 2.73 & 6.32 & 14.3 & 31.9
\\
\hdashline
Total & 0.11 & 0.16 & 0.25 & 0.44 & 0.83 & 1.65 & 3.38 & 7.58 & 19.7 & 38.7 & 87.1
\\
\hline
\Sage ($p$) & 4.33 & 15.5 & 56.0 & 223 & 879
\\
\Magma ($p$) & 3.11 & 11.8 & 44.9 & 181 & 749
\\
\hline
\Sage ($q$) & 0.03 & 0.08 & 0.20 & 0.46 & 1.34 & 3.61 & 9.54 & 28.7 & 69.1 & 193 & 534
\\
\Magma ($q$) & 0.10 & 0.28 & 0.73 & 1.76 & 5.62 & 15.2 & 39.8 & 111 & 325
\\
\hline
\\
$e=2, r=4$ & $2^{14}$ & $2^{15}$ & $2^{16}$ & $2^{17}$ & $2^{18}$ & $2^{19}$ & $2^{20}$ & $2^{21}$ & $2^{22}$ & $2^{23}$ & $2^{24}$\\
\hline
\hline
Phase (1) & 0.12 & 0.23 & 0.41 & 3.28 & 1.59 & 3.33 & 10.7 & 19.4 & 42.9 & 94.8 & 203
\\
Phase (2) & 0.06 & 0.11 & 0.21 & 0.37 & 0.68 & 1.27 & 2.36 & 4.39 & 8.57 & 17.2 & 37.9
\\
Phase (3) & 0.08 & 0.14 & 0.21 & 0.47 & 1.00 & 2.19 & 5.04 & 12.3 & 28.6 & 67.7 & 147
\\
\hdashline
Total & 0.26 & 0.48 & 0.84 & 4.12 & 3.27 & 6.79 & 18.0 & 36.1 & 80.0 & 179 & 388
\\
\hline
\Sage ($p$) & 119 & 469
\\
\Magma ($p$) & 17.3 & 65.7 & 454
\\
\hline
\Sage ($q$) & 0.11 & 0.23 & 0.48 & 1.40 & 3.67 & 9.58 & 26.1 & 68.4 & 188 & 518
\\
\Magma ($q$) & 0.32 & 0.86 & 2.93 & 10.3 & 29.2 & 81.1 & 249 & 762
\\
\hline
\\
$e=3, r=6$ & $2^{13}$ & $2^{14}$ & $2^{15}$ & $2^{16}$ & $2^{17}$ & $2^{18}$ & $2^{19}$ & $2^{20}$ & $2^{21}$ & $2^{22}$ & $2^{23}$\\
\hline
\hline
Phase (1) & 0.16 & 0.30 & 0.61 & 1.02 & 2.12 & 4.46 & 10.0 & 25.0 & 54.1 & 123 & 283
\\
Phase (2) & 0.05 & 0.08 & 0.16 & 0.38 & 0.59 & 1.26 & 2.33 & 4.69 & 8.67 & 17.2 & 33.8
\\
Phase (3) & 0.11 & 0.23 & 0.32 & 0.72 & 1.98 & 4.60 & 11.4 & 27.6 & 71.4 & 158 & 374
\\
\hdashline
Total & 0.31 & 0.62 & 1.09 & 2.11 & 4.69 & 10.3 & 23.8 & 57.3 & 134 & 298 & 691
\\
\hline
\Sage ($p$) & 54.9 & 210 & 793
\\
\Magma ($p$) & 11.2 & 42.8 & 162 & 626 & 2437
\\
\hline
\Sage ($q$) & 0.06 & 0.12 & 0.28 & 0.57 & 1.64 & 4.31 & 11.3 & 30.7 & 165 & 869 & 2825
\\
\Magma ($q$) & 0.27 & 0.68 & 1.98 & 4.37 & 14.5 & 37.2 & 100 & 291 & 907 & 2951
\\
\hline
\\
$e=4, r=8$ & $2^{13}$ & $2^{14}$ & $2^{15}$ & $2^{16}$ & $2^{17}$ & $2^{18}$ & $2^{19}$ & $2^{20}$ & $2^{21}$ & $2^{22}$ & $2^{23}$\\
\hline
\hline
Phase (1) & 0.45 & 0.72 & 1.23 & 2.35 & 4.86 & 10.7 & 23.5 & 57.9 & 130 & 304 & 697
\\
Phase (2) & 0.09 & 0.17 & 0.32 & 0.60 & 1.24 & 2.33 & 4.38 & 8.38 & 16.8 & 32.4 & 65.5
\\
Phase (3) & 0.19 & 0.40 & 0.67 & 1.78 & 4.47 & 10.9 & 26.1 & 63.9 & 163 & 365 & 865
\\
\hdashline
Total & 0.74 & 1.28 & 2.22 & 4.72 & 10.6 & 23.9 & 54.0 & 130 & 310 & 702 & 1629
\\
\hline
\Sage ($p$) & 76.5 & 291 & 1088
\\
\Magma ($p$) & 12.8 & 49.4 & 189 & 745 & 3789
\\
\hline
\Sage ($q$) & 0.07 & 0.20 & 0.31 & 1.74 & 7.75 & 24.6 & 66.6 & 184 & 494
\\
\Magma ($q$) & 0.30 & 0.81 & 2.32 & 5.09 & 21.2 & 56.4 & 155 & 463

\end{tabular}
\end{table}

\section{Tabulation of \texorpdfstring{$L$}{L}-functions}
\label{sec:tabulation}

As mentioned in the introduction, the broader context for our work is the desire to tabulate hypergeometric $L$-functions at scale in LMFDB. This problem naturally breaks down as follows.
\begin{itemize}
\item
For a fixed hypergeometric $L$-function of motivic weight $w$, we may compute $p$-Frobenius traces for small $p$ using \eqref{eq:Hq} and for larger $p$ up to some cutoff $X$
using our algorithm,
in both cases excluding tame and wild primes.
\item
For prime powers $q \leq X$, excluding powers of tame and wild primes, as noted in \cref{sec:timings} we may use \Sage{} or \Magma{} to compute the $q$-Frobenius trace. (We may also skip $q = p^f$ for $f > r/2$ by virtue of the local functional equation, but this is a minor point because the dominant case is $f=2$.)  While it is likely possible to to reduce the complexity from $X^{3/2}$ to $X$ either by adapting our present approach (as suggested already in \cite{costa-kedlaya-roe20})
or implementing the algorithm indicated in \cite{kedlaya-19},
the timings in \cref{sec:timings} diminish the urgency of this.
\item
\Magma{} can compute Euler factors and conductor exponents at tame primes (modulo a tractable conjecture). The computational difficulty is negligible.\footnote{Note that identifying \emph{all} tame primes requires an integer factorization, but here we only need to know which primes $\leq X$ are tame.}
\item
For Euler factors and conductor exponents at wild primes, even the conjectural picture remains incomplete, but see \cite{roberts-rodriguez-villegas} for a partial description.
Given enough Fourier coefficients at good primes, one can empirically verify a complete guess for the conductor, the global root number, and all bad Euler factors using the approximate functional equation, as in \cite{dokchitser} or \cite{farmer-koutsoliotas-lemurell}.
\end{itemize}

Work in this direction is ongoing, and we expect to have many examples of hypergeometric $L$-functions available in LMFDB in the near future.

\printbibliography

\end{document}